\newcommand{\eqdef}{\stackrel{\scriptscriptstyle\rm def}{=}}
                 \def\RR{\mathbb{R}}        \def\ZZ{\mathbb{Z}}
\newtheorem{theorem}{Theorem}[section]
\newtheorem{proposition}[theorem]{Proposition}
\newtheorem{corollary}[theorem]{Corollary}
\newtheorem{lemma}[theorem]{Lemma}
\newtheorem{property}[theorem]{Property}
\newcommand{\beha}{\begin{enumerate}}
\newcommand{\behe}{\end{enumerate}}
\renewcommand{\epsilon}{\varepsilon}
\newcommand{\Per}{{\rm Per}}
\newcommand{\Or}{\mathcal{O}}
\newcommand{\cM}{\EuScript{M}}
\newcommand{\cH}{\EuScript{H}}
\newcommand{\bR}{{\mathbb R}}
\newcommand{\bZ}{{\mathbb Z}}
\newcommand{\bN}{{\mathbb N}}
\newcommand{\cB}{{\mathcal B}}
\newcommand{\cC}{{ \mathcal{C}}}
\newcommand{\cK}{{\mathcal K}}
\newcommand{\cU}{{\mathcal U}}
\newcommand{\cV}{{\mathcal V}}
\newcommand{\cL}{{\mathcal L}}
\newcommand{\cR}{{\mathcal R}}
\newcommand{\cS}{{\mathcal S}}
\let\d=\delta
\def\1{1\!\!1}
\def\and{\text{ and }}
        \def\conv{\text{{\rm conv}}}
                             \def\d{\delta}
                           \def\l{\lambda}
                        \def\^{\widetilde}
\def\Per{{\rm Per}}
\def\inn{{\rm int}}
\def\Per{{\rm Per}}
\def\1{1\!\!1}
\def\rv{{\rm rv}}
\def\inn{{\rm int\ }}
\newtheorem*{thmMain}{Main Theorem}
\DeclareMathSymbol{\varnothing}{\mathord}{AMSb}{"3F}
\renewcommand{\emptyset}{\varnothing}
\title{A horseshoe with a discontinuous entropy spectrum}
\thanks{P.J. and J.W. were supported by a Dr. Barnett and Jean Hollander Rich CCNY Mathematics Scholarship.} 
\thanks{C.W. was partially supported by  grants from the PSC-CUNY (TRADB-49-253 to Christian Wolf) and the Simons Foundation (\#637594 to Christian Wolf).}
\author{Pavel Javornik}\address{Department of Mathematics, The City College of New York, New York, NY, 10031, USA}\email{pjavorn000@citymail.cuny.edu}
\author{Joseph Winter}\address{Department of Mathematics, The City College of New York, New York, NY, 10031, USA}\email{jwinter@citymail.cuny.edu}
\author{Christian Wolf}\address{Department of Mathematics, The City College of New York, New York, NY, 10031, USA}\email{cwolf@ccny.cuny.edu}
\begin{document}

\begin{abstract}
We study the regularity of the entropy spectrum of the Lyapunov exponents for hyperbolic maps on surfaces. It is well-known that the entropy spectrum is a concave upper semi-continuous function which is analytic on the interior of the set Lyapunov exponents. In this paper we construct a
family of horseshoes with a discontinuous entropy spectrum at the boundary of the set of Lyapunov exponents.
\end{abstract}
\keywords{Multifractal spectra, hyperbolicity, entropy, Lyapunov exponents, discontinuity. }
\subjclass[2010]{Primary 37D35, 37B40 Secondary 37B10,  37C45, 37D20}
\maketitle

\section{Introduction}

\subsection{Motivation}
 It is one of the central objectives in the  multifractal analysis of dynamical systems to study relevant  characteristics of complexity (entropy, pressure, dimension, etc.) on  level sets of various dynamically defined functions, see e.g. \cite{BPS} for an introduction of the concepts. In the context of differentiable dynamics, arguably the most important level sets are given by the pointwise Lyapunov exponents since they provide a quantitative measurement for the chaotic nature of the system. In this paper we consider hyperbolic diffeomorphisms $f$ on surfaces (henceforth called horseshoe maps) and study the regularity of the entropy spectrum $\cH_f$ on set of Lyapunov exponents $\cR_L(f)$. These spectra are of interest in higher-dimensional multifractal analysis \cite{BSS,C, FFW}.

  It is frequently observed for hyperbolic systems that the spectrum is analytic in the interior and continuous at the boundary of the domain. In particular,   for one-dimensional spectra continuity   follows from the simple fact that  concave and upper semi-continuous functions  are continuous. For higher dimensional spectra the situation is more complicated, in part, since concave and upper semi-continuous functions may have in theory discontinuities at the boundary of the domain unless the domain is   polyhedral \cite{GKR}.
We note that the case $\cR_L(f)$ being a  polyhedron occurs for example  if $f$ is piecewise linear \cite{Z}, in which case  $\cH_f$ is continuous on $\cR_L(f)$.
In this paper we show that  the entropy spectrum may be discontinuous even when the polyhedron property fails only at one point. Namely, we construct a $C^2$-horseshoe map $f$ with a discontinuous entropy spectrum for which $\cR_L(f)$ is a polygon with infinitely many vertices where all but one of the vertices are isolated.  We construct this example  as a limit of  piecewise linear horseshoe maps $f_k$ that go through a series of local perturbations. Our example is motivated by the recently obtained discontinuity of the entropy spectrum for shift maps and certain Lipschitz  potentials \cite{Wo}. Other than in the symbolic case where one can directly define the appropriate potential  on the shift space, in the smooth case  the potential is indirectly defined through the expansion/contraction rates on the horseshoe  which leads to various technical complications in our construction. 
 \subsection{Statement of the result}Let $f:M\to M$ be a $C^{1+\epsilon}$ diffeomorphism on a smooth surface $M$, and let $\Lambda\subset M$ be a basic set of $f$, that is, $\Lambda$ is a compact locally maximal hyperbolic set of $f$ such $f\vert_\Lambda$ is topologically mixing. Let  $T_\Lambda f =E^u\oplus E^s$ denote the hyperbolic splitting of the tangent space of $\Lambda$. % We also say that  $\Lambda$ is a horseshoe of $f$.
Let $\cM$ denote the set of  $f$-invariant Borel probability measures on $\Lambda$, and let $\cM_E\subset \cM$ be the subset of ergodic measures. Recall that $\cM$ endowed with the weak$^\ast$ topology is a compact convex metrizable topological space. We define $\Phi_L=\Phi_{L}(f)=(-\phi^s,\phi^u): \Lambda \to\bR^2$ where $\phi^{s/u}=\log \|Df|_{E^{s/u}}\|$.  
Recall that $-\phi^{s}$ and $\phi^u$ are positive\footnote{After a possible change to an adapted Riemann metric.}  Lipschitz continuous functions.
 We define 
\begin{equation}
\cR_L(f)=\cR_L(f,\Phi_L)=\{\rv(\mu):\mu\in \cM\},
\end{equation}
 where $\rv(\mu)=(-\int \phi^s\, d\mu, \int \phi^u\, d\mu)$. It follows that $\cR_L(f)$ is a compact and convex set that is contained in the interior of the first quadrant of $\bR^2$.  The entropy spectrum of $\cR_L(f)$   is defined 
by  
\begin{equation}\label{defcH}
\cH_f(w)=\sup \{h_\mu(f): \rv(\mu)=w\},
\end{equation}
where $h_\mu(f)$ denotes the measure-theoretic entropy of $\mu$.  We note that $\cH_f|_{\inn \cR_L(f)}$  is  analytic and coincides with the entropy of the  level  sets of Lyapunov exponents \cite{BSS,C}. More precisely, given $w=(w_1,w_2)\in  \inn \cR_L(f)$ we have
$\cH_f(w)= h(f|_{\cK_{w_1,w_2}})$,  where 
\[
\cK_{w_1,w_2}=\left\{x\in \Lambda: \lim_{n\to \infty}\frac{1}{n} \log \Vert Df^{\pm n}(x)|_{E^{s/u}}\Vert =w_{1/2}\right\}\]
and  $h(f|_{\cK_{w_1,w_2}})$ is the entropy of $f$ restricted to the (in general) non-compact invariant set $\cK_{w_1,w_2}$ (see \cite{Bo}). Recall that $\cK_{w_1,w_2}$ is the set of points $x\in \Lambda$ with Lyapunov exponents $-w_1<0<w_2$. It is straight-forward to verify that $\cH_f$ is concave and upper semi-continuous. This suggests that $\cH_f$ might be continuous. However, somewhat unexpectedly, we are able to show the 
 following result.

\begin{thmMain}
There exists a  $C^2$-surface diffeomorphism $f$ with a basic set $\Lambda$, such that $f|_\Lambda$ is conjugate to a  full shift with $3$ symbols, with the following properties:
\begin{enumerate}
\item[\rm (i)] The set $\cR_L(f)$ has non-empty interior and countably many extreme points of which all but one are isolated;
 \item[\rm (ii)] The entropy spectrum $w\mapsto \cH_f(w)$ is discontinuous at the non-isolated extreme point.
 \end{enumerate}
\end{thmMain}

We  note that the reason for formulating our theorem for hyperbolic sets whose symbolic representation is given by a full-shift in  $3$ symbols   is for the  ease  of presentation. Our construction can be modified to obtain similar discontinuity results for other hyperbolic sets of surface diffeomorphisms. Note that the boundary of $\cR_L(f)$ in the main theorem is  a countable polygon with precisely one non-isolated vertex point. This shows that  $\cR_L(f)$ is of the simplest possible shape for which $w\mapsto \cH_f(w)$ can be discontinuous. Indeed, it is a consequence of the celebrated Gale, Klee and Rockafellar theorem \cite{GKR} that that every concave upper semi-continuous function with polyhedral domain is continuous. In particular, in our example $w\mapsto \cH_f(w)$ is continuous everywhere except at the non-isolated extreme point of  $\cR_L(f)$. The strategy to prove the Main Theorem is to apply a similar recently obtained discontinuity result  for one-sided shift maps \cite{Wo} and to construct a horseshoe map for which the entropy spectrum of the Lyapunov exponents coincides with that of  the symbolic system. To accomplish this, we consider a piecewise linear horseshoe and perform
countably many perturbations (which we call surgeries) to obtain a $C^2$-horseshoe that satisfies the assertions in our main theorem. 

Finally we mention the possible applicability of our methods to establish further properties of various spectra
for smooth systems. One example of such an application is  the study of the geometric shape of the set of Lyapunov exponents $\cR_L(f)$ (cf.  \cite{KW} for related result for symbolic systems).

This paper is organized as follows. In Section 2 we recall some basic notation from ergodic theory and symbolic dynamics. Moreover, we discuss a symbolic example of a discontinuous localized entropy function. Section 3 is devoted to the construction of a horseshoe map $f=\lim_{k\to\infty} f_k$ that satisfies the assertions of Theorem \ref{thm32}. Finally, in the section 4 we complete the proof of our Main Theorem.

 \section{Settings and background material.}
 In this section we introduce the relevant background material.  In particular, we provide an overview of the pertinent results and definitions from symbolic dynamics and discuss a recent result concerning a discontinuous entropy spectrum for shift maps.
\subsection{Shift maps}
We start by reviewing some material from symbolic dynamics. 
We will discuss simultaneously one-sided and two-sided shift maps.
We denote by $\bZ$ the set of all integers and by $\bN=\{0,1,2,3,\cdots\}$ the set of all non-negative integers. Let $d\geq 2$ and  $E=\{0,\dots,d-1\}$. We define
\begin{align*}
\Sigma^+_d&=\left\{(\xi_k)_0^{+\infty}: \xi_k\in E\right\},\,\, \text{and}\\
\Sigma^\pm_d&=\left\{(\xi_k)_{-\infty}^{+\infty}: \xi_k\in E\right\}
\end{align*}
We refer to either of these sets as a shift or a symbol space. In the case when we do not
want to specify the shift space or also when it is clear from the context which shift space
is meant, we write $\Sigma$ instead of $\Sigma^+$
 or $\Sigma^\pm$. We endow $\Sigma$ with the {\em Tychonov product }topology
which makes $\Sigma$ a compact metrizable space. For example, given $0<\theta<1$, the metric given by
\[d(\xi,\eta)=d_\theta(\xi,\eta)\eqdef\theta^{\min\{k\in \bN:\  \xi_k\not=\eta_k\, \text{or}\,    \xi_{-k}\not=\eta_{-k}   \}}\]
induces the Tychonov product topology on $\Sigma$. Here we use the common convention that $\theta^{+\infty}=0$.
The (left) {\em shift map} $\sigma:\Sigma\to \Sigma$ is defined by $\sigma(\xi)_k=\xi_{k+1}$. Occasionally, in order to avoid confusion, we write $\sigma^+$ for $\sigma : \Sigma^+\to 
\Sigma^+$ and $\sigma^{\pm}$ for  $\sigma : \Sigma^\pm\to 
\Sigma^\pm$.
Note that in the case of $\Sigma^\pm$, the shift map is injective and in the
case of $\Sigma^+$, the shift map is a $d$ to one maps that performs cutting off the zero$^{\rm th}$ coordinate.
For $i\leq j$ and $t=t_it_{i+1} \cdots t_{j}\in E^{j-i+1}$   
we denote the  cylinder generated by $t$ by \[\cC_{i,j}(t)=\{\xi\in \Sigma: \xi_i=t_i,\dots, \xi_{j}=t_{j}\}.\]  Moreover, for  $\xi\in \Sigma$ and $i\leq j$ we write 
$\xi\vert_i^j=\xi_i\cdots\xi_{j}$ and  call $\cC_{i,j}(\xi)=\cC_{i,j}(\xi_i\cdots \xi_j)$ the  cylinder starting at $i$ of length $j-i+1$ generated by $\xi$. If $i=0$ we frequently write $\xi\vert^j$ 
instead of $\xi \vert^j_0$ and $\cC_{j}(\xi)$ instead of $\cC_{i,j}(\xi)$. 
We denote by
$\Pi: \Sigma^\pm\to \Sigma^+$ 
the projection from $\Sigma^\pm$ to $\Sigma^{+}$ defined by
\begin{equation}\label{defproj}
\Pi\left((\xi_k)_{-\infty}^{+\infty}\right)=(\xi_k)_0^{+\infty}.
\end{equation}
Further, given $t=t_0t_1\cdots t_{n-1}$ we call  \[\Or(t)=\cdots t_0\cdots t_{n-1}t_0\cdots t_{n-1}t_0\cdots t_{n-1}\cdots \in \Sigma^\pm\] respectively \[\Or(t)= t_0\cdots t_{n-1}t_0\cdots t_{n-1}t_0\cdots t_{n-1}\cdots \in \Sigma^{+}\] the periodic point of $\sigma$  generated by $t$ (of period $n$).  For a periodic point $\xi$ with prime period $n$  we call $\tau_\xi=\xi_0\cdots \xi_{n-1}$ the generating segment of  $\xi$, that is, $\xi=\Or(\tau_\xi)$.  
\subsection{Notions from ergodic theory}
Let  $f:X\to X$ be a continuous map on a compact metric space $X$. Let $\cM$ denote the set of all $f$-invariant Borel probability measures on $X$ endowed with the weak$^\ast$ topology, and let $\cM_E\subset \cM$ be the subset of ergodic measures. Recall that $\cM$ is a compact convex metrizable topological space. Given $\mu\in \cM$ we denote by $h_\mu(f)$ the measure-theoretic entropy of $\mu$, see  \cite{Wal:81} for the definition and details. Recall that if $f$ is expansive, then the entropy map $\mu\mapsto h_\mu(f)$ is upper semi-continuous. In particular, if $f$ is a  shift map then $\mu\mapsto h_\mu(f)$ is upper semi-continuous.
We denote by $\Per_n(f)$ the set of periodic points of $f$ with prime period $n$  and by $\Per(f)$ and the set of periodic points of $f$. For $x\in \Per_n(f)$,  the unique invariant measure supported on the orbit of $x$ is given by 
$
\mu_x=\frac{1}{n}(\delta_x+\dots +\delta_{f^{n-1}(x)}), $ where
 $\delta_y$ denotes the  Dirac measure on $y$. We also call $\mu_x$ the periodic point measure of $x$. Obviously, $\mu_x=\mu_{f^l(x)}$ for all $l\in\bN$. We write $\cM_{\rm Per}=\{\mu_x: x\in \Per(f)\}$ and observe that $\cM_{\rm Per}\subset \cM_E$.  Let $\Phi=(\phi_1,\dots,\phi_m):X\to \bR^m$ be a continuous $m$-dimensional  potential. For $\mu\in \cM$ we define $\rv(\mu)=(\int \phi_1\, d\mu,\dots, \int \phi_m\, d\mu)\in \bR^m$ and 
 \[
 \cR(\Phi)=\{\rv(\mu): \mu\in \cM\}.
 \]
 It follows from the definitions that $\cR(\Phi)$ is a compact convex subset of $\bR^m$. The set $\cR(\Phi)$ is often called the  rotation set of the potential $\Phi$, see \cite{GM,Je,KW} for details. The entropy spectrum of $\Phi$ on $\cR(\Phi)$ is defined by
 \[
 \cH(w)=\cH_{f,\Phi}(w)=\sup\{h_\mu(f): \rv(\mu)=w\}.
 \]
  We frequently omit the dependence of $\cH_{f,\Phi}$ on $f$ and/or $\Phi$ when it is clear from the context which map $f$ and/or potential $\Phi$ is meant. 
\subsection{Discontinuous entropy spectra for shift maps}\label{sec:disc_entropy}
Next we review results from \cite{Wo} where  the discontinuity of entropy spectra for shift maps is proven.\footnote{Here we present a slightly modified version of the example in \cite{Wo}  with the goal to ease the presentation of this paper.}   
First we introduce certain sets in $\bR^2$  to define the family of $2$-dimensional potentials with a discontinuous entropy spectrum.
Fix $a,b\in \bR$ with $\log 3 <a<b$ and let $\alpha\in\bN$. Fix $\theta\in (0,1)$. We consider a strictly increasing and strictly concave (and hence continuous) function $h:[a,b]\to\bR$ with $h(a)>\log 3$. We define $w_\infty=(a,h(a))$ and let $(x_\ell)_{\ell \geq 1}$ be a strictly decreasing sequence with $x_\ell \in (a,b)$  such that $v_\ell\eqdef(x_\ell,h(x_\ell))$ satisfies
$ ||v_\ell-w_\infty||<C\theta^\ell$
for all $\ell\geq 1$ and some $C>0$.   We define $u_\ell=(x_\ell,h(a))$ for all $\ell\geq 1$. 
  Let $w_0=(b,h(a))$. Further, for $\ell\geq 1$ we define 
  \begin{equation}\label{defwk}
  w_\ell=\frac{1}{\ell+\alpha+1}\left((\alpha+1) w_0+\sum\limits_{j=1}^{\ell}v_j\right).
  \end{equation}
Define
$\cV=\{w_\ell: k\geq 0\}\cup     \{w_\infty\}$.
Further, let $\cR=\conv(\cV)$ denote the convex hull of $\cV$. 
 It follows that $\cR$ is a compact set whose boundary  is an infinite polygon with extreme point set $\cV$. 
We refer to Figure 1 for an illustration.
\begin{figure}
\includegraphics[width=3in]{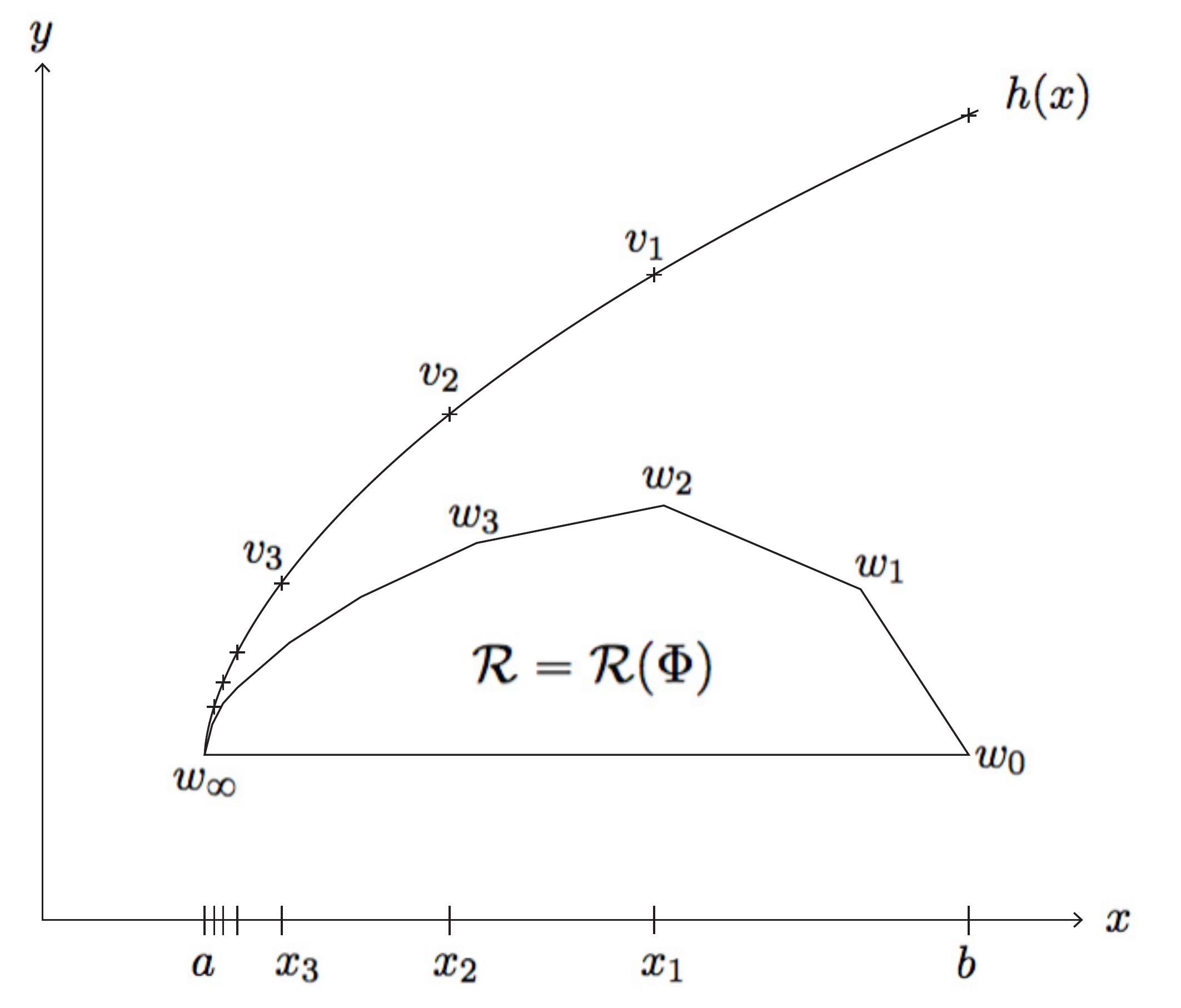}
\caption{The set $\cR=\cR(\Phi)$ generated by the function  $h:[a,b]\to\bR$ and the sequence $(x_\ell)_{\ell\geq 1}.$  }
\end{figure}

Let $\sigma:\Sigma^+_3\to \Sigma^+_3$ be the one-sided full shift with alphabet $\{0,1,2\}$  endowed with the $\theta$-metric. We shall construct a potential  $\Phi:\Sigma^+_3\to \bR^2$ which depends on the function $h$ and the sequence of points $(x_k)_{k\geq 1}$ as follows:   First, we define several subsets of $\Sigma^+_3$.  Let $S=\{0,1\}$. Let $k\in \bN$. We define
\begin{equation}\label{defphisets}
\begin{aligned}
X(k)&=\{\xi \in \Sigma^+_3: \xi_0,\dots,\xi_{k-1}\in S, \xi_{k}=2\},\\
X_0(\alpha)&=\bigcup_{k=0}^{\alpha} X(k),\\
X(\infty)&=\{\xi \in \Sigma^+_3: \xi_k\in S \ \mbox{for all} \ k\in \bN\}=S^{\bN}.
%Y(k)&=\{\xi \in X: \xi_1,\dots,\xi_{k-1}=2, \xi_k\not=2\}
 \end{aligned}
\end{equation}
Note that $X(0)=\cC_0(2)=\{\xi \in \Sigma^+_3: \xi_0=2\}$.
We define $\Phi:\Sigma_3^+\rightarrow\mathbb{R}^2$ by
\begin{equation}\label{defphipot}
\Phi(\xi)=\begin{cases}
w_0& {\rm if}\,\,   \xi\in X_0(\alpha)\\
                      u_{k-\alpha}   & {\rm if}\,\,  \xi\in X(k)\setminus \cC_{k-1}(1^{k})\,  , \, k>\alpha\\
                       v_{k-\alpha} & {\rm if} \, \, \xi\in X(k)\cap \cC_{k-1}(1^{k}),\, k>\alpha\\
                       w_\infty & {\rm if}\,\,  \xi\in X(\infty)\, %{\rm or }\,  \, \xi=\Or(2)
            \end{cases}
\end{equation}

For $\ell\geq 1$ we consider the periodic point $\xi^\ell=\Or(1^{\ell+\alpha}2)$, that is $\xi^\ell$ is the periodic point whose generating segment $\tau_\ell\eqdef \tau_{\xi^\ell}$ is given by $\ell+\alpha$ 1's followed by a $2$. Hence $\xi^\ell\in\Per_{\ell+\alpha+1}(\sigma)$.
It follows from the definition of $\Phi$ (see \eqref{defphipot}) and an elementary computation  that $\rv(\mu_{\xi^\ell})=w_\ell$.
We have the following.
\begin{theorem}[\cite{Wo}]\label{thmwo}
Let $\Phi:\Sigma^+_3\to \bR^2$ be defined as in \eqref{defphipot}. Then
\begin{enumerate}
\item[\rm (i)] 
 The potential $\Phi$ is Lipschitz continuous and $\cR(\Phi)=\conv(\cV)$. Moreover, for each $\ell\in \bN\cup\{\infty\}$ the point $w_\ell$ is an extreme point of  $\cR(\Phi)$ and  $\lim_{\ell\to \infty} w_\ell=w_\infty$;
 \item[\rm (ii)]    For each $\ell\in\bN$ with $\ell\geq 1$, $\cM_\Phi(w_\ell)\eqdef \{\mu\in \cM:\rv(\mu)=w_\ell\}=\{\mu_{\xi^\ell}\}$, in particular $\cH(w_\ell)=0$;
\item[\rm (iii)]  $\Phi^{-1}(\{w_\infty\})=\{0,1\}^{\bN}$ and $\cH(w_\infty)=\log 2$. In particular, $\cH$ is not continuous at $w_\infty$.
 \end{enumerate}
\end{theorem}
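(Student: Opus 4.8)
The plan is to verify the three assertions in order, since each builds on the combinatorial description of $\Phi$ given in \eqref{defphipot}. For (i), I would first check Lipschitz continuity of $\Phi$ on $(\Sigma^+_3, d_\theta)$: the only possible failure of continuity is at points $\xi \in X(\infty) = S^{\bN}$, where $\Phi = w_\infty$. A point $\eta$ agreeing with such a $\xi$ on the first $k$ coordinates but with $\eta_k = 2$ lies in $X(k)$ (or deeper), so $\Phi(\eta) \in \{u_{k-\alpha}, v_{k-\alpha}\}$ (or $w_0$ if $k\le\alpha$), and by the hypothesis $\|v_\ell - w_\infty\| < C\theta^\ell$ together with $u_\ell - v_\ell = (0, h(a) - h(x_\ell))$, which is also $O(\theta^\ell)$ by Lipschitz continuity of $h$ on the compact interval $[a,b]$, we get $\|\Phi(\eta) - w_\infty\| = O(\theta^k) = O(d_\theta(\xi,\eta))$. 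Inside each cylinder piece $\Phi$ is locally constant, so the local Lipschitz estimates glue to a global one (one checks the remaining pairs $\xi,\eta$ that first differ at coordinate $k$ but neither lies in $X(\infty)$, where the jump is again between points all within $O(\theta^k)$ of each other, using that consecutive $v_\ell$ and the $u_\ell$'s cluster near $w_\infty$ and $w_0,\dots,$ appropriately). Then $\cR(\Phi) = \conv(\Phi(\Sigma^+_3))$: one containment is immediate since $\rv(\mu)$ is an average of values of $\Phi$; for the reverse, note $\Phi(\Sigma^+_3) \subseteq \{w_0\} \cup \{u_\ell, v_\ell : \ell \ge 1\} \cup \{w_\infty\}$, each such value is attained by a periodic (hence invariant) measure — e.g. the fixed point $\Or(0)$ or $\Or(2)$ realizes $w_0$, the point $\Or(1^{\ell+\alpha}0 2)$ or similar realizes $u_\ell$, etc. — so $\cR(\Phi) \supseteq \conv$ of all these points, and finally one observes that the $u_\ell$ lie in $\conv(\cV)$ (they are below the graph of $h$, between $w_0$ and $w_\infty$-ish) so that $\conv(\Phi(\Sigma^+_3)) = \conv(\cV) = \cR$. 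That each $w_\ell$ and $w_\infty$ is an extreme point of $\cR$ is the geometric statement already recorded after \eqref{defwk}: it follows from strict concavity of $h$ (which makes the $v_\ell$ extreme on the upper boundary) and the explicit convex-combination formula \eqref{defwk} placing the $w_\ell$ on the lower boundary polygon; $\lim w_\ell = w_\infty$ follows by averaging \eqref{defwk} against $\|v_j - w_\infty\| < C\theta^j$.

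For (ii), fix $\ell \ge 1$ and suppose $\rv(\mu) = w_\ell$ with $\mu \in \cM$. Since $w_\ell$ is an extreme point of $\cR(\Phi)$ and $\rv$ is affine, $\mu$ must be supported on $\Phi^{-1}(F)$ where $F$ is the minimal face of $\cR$ containing $w_\ell$, namely $\{w_\ell\}$ itself; hence $\mathrm{supp}\,\mu \subseteq \Phi^{-1}(\{w_\ell\})$. But from \eqref{defphipot} the value $w_\ell$ — for $\ell \ge 1$ — is not among the listed output values $w_0, u_{k-\alpha}, v_{k-\alpha}, w_\infty$ unless $\mu$ charges a set on which the $\Phi$-average, not $\Phi$ itself, equals $w_\ell$; the correct statement is that $\rv(\mu) = w_\ell$ forces $\mu$ to be supported on the $\sigma$-invariant set on which the Birkhoff averages of $\Phi$ equal $w_\ell$, and by the extreme-point/ergodic-decomposition argument $\mu$-a.e. orbit must have $\Phi$-values whose barycenter is exactly $w_\ell$. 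The key combinatorial point is that the only invariant probability measure whose generic orbit visits the cylinders carrying $v_\ell$ (namely $X(\ell+\alpha) \cap \cC_{\ell+\alpha-1}(1^{\ell+\alpha})$) and $w_0$ in the exact proportions dictated by \eqref{defwk} is the periodic measure $\mu_{\xi^\ell}$: an orbit realizing $w_\ell$ as a barycenter of the extreme points $\{w_0, v_1, v_2, \ldots\}$ must, by uniqueness of the representation of $w_\ell$ as such a barycenter (which \eqref{defwk} gives — it uses only $w_0$ and $v_1,\dots,v_\ell$ with the stated weights), use precisely weight $\tfrac{\alpha+1}{\ell+\alpha+1}$ on $w_0$ and $\tfrac{1}{\ell+\alpha+1}$ on each of $v_1,\dots,v_\ell$; tracking which cylinders produce which values and using that the shift must move consistently between them pins the orbit down to $\Or(1^{\ell+\alpha}2)$. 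Consequently $\cM_\Phi(w_\ell) = \{\mu_{\xi^\ell}\}$ and $\cH(w_\ell) = h_{\mu_{\xi^\ell}}(\sigma) = 0$.

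For (iii), the identity $\Phi^{-1}(\{w_\infty\}) = \{0,1\}^{\bN} = X(\infty)$ is immediate from \eqref{defphipot}, provided one checks $w_\infty \notin \{w_0\} \cup \{u_\ell, v_\ell : \ell\ge 1\}$, which holds since $v_\ell \to w_\infty$ strictly monotonically (so no $v_\ell$ equals $w_\infty$), $u_\ell$ has second coordinate $h(a) < h(x_\ell)$-side issues aside lies off the graph point $w_\infty=(a,h(a))$ because $x_\ell \ne a$, and $w_0 = (b,h(a)) \ne (a,h(a))$. Then $\cM_\Phi(w_\infty) \supseteq \{\mu \in \cM : \mathrm{supp}\,\mu \subseteq X(\infty)\}$ and in fact equality holds by the same extreme-point reasoning as in (ii) (here the face is again the single point $w_\infty$, forcing support in $\Phi^{-1}(\{w_\infty\}) = X(\infty)$). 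The subsystem $\sigma|_{X(\infty)}$ is the full $2$-shift on $\{0,1\}$, whose topological entropy is $\log 2$ and is attained by the $(1/2,1/2)$-Bernoulli measure, so $\cH(w_\infty) = \sup\{h_\mu(\sigma) : \mathrm{supp}\,\mu \subseteq X(\infty)\} = \log 2$. Combined with $\cH(w_\ell) = 0$ for all $\ell \ge 1$ and $w_\ell \to w_\infty$ from part (i), this gives $\limsup_{\ell\to\infty} \cH(w_\ell) = 0 < \log 2 = \cH(w_\infty)$, so $\cH$ is discontinuous at $w_\infty$. The main obstacle is the uniqueness argument in (ii): one must rule out non-periodic invariant measures and also measures mixing in contributions from the $X(k)\setminus\cC_{k-1}(1^k)$ pieces (carrying $u_{k-\alpha}$) in a way that still averages to $w_\ell$; this requires exploiting that $u_\ell$ lies strictly below the segment $[w_0, w_\infty]$ (or below the relevant edge of the lower boundary polygon through $w_\ell$), so that any positive weight on a $u$-value would push the barycenter strictly into the interior, off the extreme point $w_\ell$ — hence such weight must be zero, and the surviving weights on $w_0$ and the $v_j$'s are uniquely determined.
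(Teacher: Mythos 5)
This theorem is imported wholesale from \cite{Wo}; the present paper offers no proof of its own, so your attempt cannot be compared against one. Judged on its own merits, your argument for (iii) is sound, but (i) and (ii) contain substantive errors. In (i), both links of the chain $\cR(\Phi)=\conv(\Phi(\Sigma^+_3))=\conv(\cV)$ are false. The rotation set is in general a proper subset of $\conv(\Phi(\Sigma^+_3))$, and here in fact $v_\ell\notin\conv(\cV)$: from \eqref{defwk} one gets $v_\ell = w_{\ell-1}+(\ell+\alpha+1)(w_\ell-w_{\ell-1})$, which lies on the ray from $w_{\ell-1}$ through $w_\ell$ strictly beyond the extreme point $w_\ell$ and hence outside the polygon. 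Accordingly, your claim that each value of $\Phi$ is realized by a periodic measure is also false: the set $X(\ell+\alpha)\cap\cC_{\ell+\alpha-1}(1^{\ell+\alpha})$ on which $\Phi=v_\ell$ is a finite union of cylinders and is not $\sigma$-invariant, so no invariant measure is supported on it. (Also, the measure of $\Or(1^{\ell+\alpha}02)$ does not realize $u_\ell$; its rotation vector is the orbit average $\tfrac{1}{\ell+\alpha+2}\bigl(u_1+\cdots+u_{\ell+1}+(\alpha+1)w_0\bigr)$. The point $u_\ell$ does lie in $\cR(\Phi)$ — it is on the bottom edge $[w_\infty,w_0]$ of $\conv(\cV)$ — but a different measure witnesses it.) Establishing $\cR(\Phi)\subseteq\conv(\cV)$ is precisely the nontrivial inclusion and requires a dynamical argument, not a pointwise one.

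In (ii) the face argument is misapplied: the standard lemma says that if $\rv(\mu)$ is an extreme point of $\conv(\Phi(\Sigma^+_3))$ then $\mu$ is supported on the preimage of that point, but $w_\ell$ for $\ell\ge2$ lies in the \emph{interior} of $\conv(\Phi(\Sigma^+_3))$ (being a strict convex combination of the non-collinear points $w_0,v_1,\dots,v_\ell$), so being extreme in the smaller set $\cR(\Phi)$ buys you nothing. Your closing claim that $u_\ell$ ``lies strictly below the segment $[w_0,w_\infty]$'' is also false; $u_\ell=(x_\ell,h(a))$ has second coordinate $h(a)$, the same as both endpoints, so it lies \emph{on} that segment. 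The uniqueness $\cM_\Phi(w_\ell)=\{\mu_{\xi^\ell}\}$ has to be extracted from how $\sigma$ moves points between the cylinders carrying the various $\Phi$-values, together with ergodic decomposition and the Birkhoff theorem, rather than from any static ``unique barycenter representation'' of $w_\ell$ (which does not exist inside $\conv(\Phi(\Sigma^+_3))$). Part (iii) survives because $w_\infty$, unlike $w_\ell$ for $\ell\ge1$, \emph{is} an extreme point of $\conv(\Phi(\Sigma^+_3))$ with singleton face, so there the support-on-the-preimage argument is legitimate.
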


\
%---------------------------------------------------------------------------
\section{Discontinuity of the entropy spectrum of the Lyapunov exponents for horseshoes}\label{sec:2}
%-------------------------------------------------------------------------

%%%%CONSTRUCTION OF THE HORSESHOE
This section is devoted to the construction of a horseshoe map $f$ with basic set $\Lambda$ such that the potential $\Phi_L=\Phi_{L,f}:\Lambda\to \bR^2$ given by
\begin{equation}
\Phi_{L}( x)= (-\log \|Df(x)\vert_{E^s_x}\|,\log \|Df(x)\vert_{E^u_x}\|)
\end{equation}
 induces  the  
potential $\Phi$ defined in Equation \eqref{defphipot} (see Theorem \ref{thm32} below for the precise statement). By $\Lambda$ being a basic set of $f$ we mean that $\Lambda$ is a locally maximal hyperbolic set of $f$ such that $f\vert_\Lambda$ is topologically mixing.
We  shall construct $f$ as the limit of a sequence $(f_k)_{k \in \bN}$ of piecewise-linear horseshoe maps such that each $f_k\vert_{\Lambda_k}$ is conjugate to the full shift on $\Sigma_3^\pm$.  More precisely, we prove  following result.
\begin{theorem}\label{thm32}There exists a $C^2$-horseshoe map $f$ with basic  set $\Lambda$ such that $f\vert_\Lambda$ is topologically conjugate  to $\sigma:\Sigma_3^\pm\to \Sigma_3^\pm$ via  $g: \Lambda\to \Sigma_3^\pm$, and a map $\Phi:\Sigma^+_3 \to \bR^2$ satisfying the conditions in equation \eqref{defphipot} such that 
\begin{equation}\label{eq888}
\Phi_L(x)=\Phi\circ \Pi\circ g(x)  \quad \mbox{ for all }\,\, x\in \Lambda.
\end{equation}

\end{theorem}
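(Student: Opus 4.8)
The plan is to realize the symbolic potential $\Phi$ of Theorem~\ref{thmwo} geometrically by building a nested sequence of piecewise-linear horseshoes. We start with a standard linear horseshoe $f_0$ on a square $Q$ whose intersection $\Lambda_0=\bigcap_n f_0^n(Q)$ carries a conjugacy $g_0:\Lambda_0\to\Sigma_3^\pm$ coming from three horizontal strips $R_0,R_1,R_2$ mapped affinely across $Q$. On such a linear model the unstable/stable expansion rates are constant on each strip, so $\Phi_{L,f_0}$ is a locally constant (in fact, cylinder-depth-$1$) function, which is far from $\Phi$. The idea is then to perform countably many localized modifications (``surgeries''), indexed by $k$, where the $k$-th surgery refines the geometry inside the cylinder set $g_k^{-1}(\cC_{k-1}(1^{k})\cap\{\xi_k=2\})$ and its relatives, adjusting the affine expansion/contraction coefficients of $f_k$ on the relevant sub-rectangles so that the resulting rotation vector of the corresponding periodic orbit $g^{-1}(\Or(1^{\ell+\alpha}2))$ equals $w_\ell$, and so that the ``constant'' value of $\Phi_{L,f_k}$ on the deep cylinders matches $u_{k-\alpha}$, $v_{k-\alpha}$, $w_\infty$ as dictated by \eqref{defphipot}. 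Because $\|v_\ell-w_\infty\|<C\theta^\ell$, the $k$-th surgery only changes $f_{k-1}$ by an amount that is exponentially small in $k$ (in $C^2$ norm), so the limit $f=\lim_k f_k$ exists and is $C^2$; the nested hyperbolic sets converge to a basic set $\Lambda$ still conjugate to $\sigma:\Sigma_3^\pm\to\Sigma_3^\pm$, and the conjugacies $g_k$ converge to $g$.

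The key steps, in order, are: (1) fix the linear horseshoe $f_0$ and record explicitly how the affine data on each generation-$n$ rectangle $g_0^{-1}(\cC_{0,n-1}(t))$ determines the Birkhoff averages of $\phi^{s/u}$; (2) translate the four-case prescription \eqref{defphipot} into target values for the local expansion logarithms $\log\|Df|_{E^u}\|$ and $-\log\|Df|_{E^s}\|$ on the rectangles corresponding to $X(k)\setminus\cC_{k-1}(1^k)$, $X(k)\cap\cC_{k-1}(1^k)$, $X_0(\alpha)$, and the Cantor set $X(\infty)$; (3) design the $k$-th surgery as an explicit local perturbation of $f_{k-1}$, supported on the (small, pairwise disjoint across the relevant scales) rectangles of generation $\sim k$, that installs the target linear coefficients while preserving the Markov/full-shift structure, hyperbolicity (cone conditions), and the $C^2$ bounds, with perturbation size $O(\theta^k)$; (4) prove convergence $f_k\to f$ in $C^2$, persistence of the conjugacy in the limit (structural stability / shadowing for the nested construction), and continuity of the expansion cocycle so that on $X(\infty)$ the rates limit to the constant value giving $w_\infty$; (5) verify \eqref{eq888} holds pointwise, i.e.\ $\Phi_L(x)$ depends only on $\Pi\circ g(x)$ and equals $\Phi(\Pi g(x))$ for every $x\in\Lambda$, by checking the four cases directly from the construction.

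The main obstacle I expect is Step~(3)/(4): keeping the surgeries simultaneously (a)~local enough that they do not interfere with each other or destroy the global full-shift coding, (b)~large enough to force the prescribed expansion rates exactly (not just approximately), and (c)~small enough in $C^2$ to guarantee a $C^2$ limit that is still uniformly hyperbolic. Concretely, the tension is that $\Phi$ is only Lipschitz, not locally constant, so $\Phi_L$ must vary at every scale, yet a genuine $C^2$ diffeomorphism cannot have affine pieces at infinitely many nested scales with uniformly bounded derivatives unless the ratios of successive expansion rates tend to $1$ fast enough; this is exactly where the hypothesis $\|v_\ell-w_\infty\|<C\theta^\ell$ is used, and making the bookkeeping of derivative bounds through infinitely many compositions rigorous is the technical heart of the argument. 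A secondary difficulty is ensuring that after all surgeries the stable and unstable directions $E^{s/u}$ are still well-defined and that $\phi^{s/u}$ are genuinely Lipschitz on $\Lambda$, which requires the perturbations to respect a fixed invariant cone field; I would handle this by doing every surgery within a preassigned cone and tracking the Hölder/Lipschitz constants of the resulting splitting via the standard graph-transform estimates.
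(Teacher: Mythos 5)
Your proposal follows essentially the same route as the paper: start from a piecewise-linear three-symbol horseshoe, perform countably many localized ``surgeries'' at successively deeper cylinder levels to install the prescribed expansion/contraction data, use the exponential decay $\|v_\ell-w_\infty\|<C\theta^\ell$ to make the $k$-th perturbation exponentially small in $C^2$, invoke structural stability / shadowing to keep the conjugacy to $\Sigma_3^\pm$ through every stage and in the limit, and finish by checking the four cases of \eqref{defphipot} pointwise. The technical heart you identify --- tracking $C^2$ bounds across infinitely many compositions so the limit remains hyperbolic --- is exactly where the paper spends its effort (Property~\ref{property1} and Proposition~\ref{pro34}).

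One place where the paper is simpler than what you anticipate: you expect to need a fixed invariant cone field and graph-transform estimates to control the splitting $E^{s/u}$ of the limit map. The paper avoids this entirely by building each surgery map $\cL_k$ so that $D\cL_k$ is \emph{diagonal} on a neighborhood of $\Lambda_k$ (condition~(i) of the definition of $\cL_k$), and hence $Df_k$, and in the limit $Df$, is a diagonal matrix at every point of $\Lambda$. The hyperbolic splitting is therefore literally horizontal/vertical on $\Lambda$ at every stage, the functions $\phi^{s/u}$ are just the logarithms of the diagonal entries, and no graph-transform argument is required. This is not a flaw in your proposal --- your version would also work --- but it is a genuine simplification worth noticing: by constraining the shape of the perturbation rather than merely its size, the whole cone-field bookkeeping disappears, and Theorem~\ref{thmwich} (that $\Phi_L(x)$ depends only on $\Pi\circ g(x)$) drops out directly from piecewise linearity plus continuity on the remaining Cantor set $X(\infty)$.

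One more small point: you write that each surgery should force the prescribed rates ``exactly (not just approximately).'' The paper does achieve exact rates on $\Lambda$, but this is possible precisely because of the diagonal, piecewise-linear design: the surgery is an honest linear map $\mathrm{diag}(\d_k,\l_k)$ on the sub-box $B'$ containing $\Lambda_k\cap B$, smoothed out only in the annulus $B\setminus B'$ which never meets $\Lambda$. Without that structural choice, forcing exact values would be harder to reconcile with $C^2$ control, which is the tension you correctly flag.
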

 
 %The goal in this section Theorem \ref{thm32} will be proven in Sections 3 and 4.
 %\compav{If (8) implies (9), then why is (9) proven before (8)? It seems like we ought to get rid of the proof of (8) then.}

First we consider a piecewise linear horseshoe map $f_0$ on three symbols with a constant expansion rate $\lambda_\infty > 3$ and  two distinct contraction rates $\delta_\infty, \delta_0 \delta_\infty<\frac{1}{3} $. We denote by ${\Lambda}_0$ the basic set of ${f}_0$. Each $f_k$ is defined as a small perturbation of the previous horseshoe map $f_{k-1}$. For each $k \geq 1,$ we define finitely many disjoint subsets of the unit square where we change the expansion and contraction rates of the map, thereby adding finitely many values in the range of the derivative $Df_k$ on $\Lambda_k$. We design the corresponding perturbations, which we call  surgeries, to be $C^2$-diffeomorphisms which preserve piecewise linearity on $\Lambda_{k-1}$. We then show that  $f=\lim_{k\to\infty} f_k$ is a $C^2$-horseshoe map that satisfies the assertions in our Main Theorem in Section 1.2. \\

\subsection{Construction of the surgeries}
~Let $\cS=[0,1]\times[0,1]\subset \bR^2$, and let $\l_\infty>3$ and $0<\d_\infty<\frac{1}{3}$ be fixed. We consider a $C^2$-diffeomorphism $\widetilde{f}_0$ defined in a neighborhood of $\cS$ onto its image in $\bR^2$ such that $\widetilde{f}_0$ is a piecewise linear $3$-fold horseshoe map on $\cS$ with constant
vertical expansion rate $\lambda_\infty$ and constant horizontal contraction rate $\delta_\infty$.\footnote{In the definition of $\lambda_\infty$ and $\delta_\infty$ we use the infinity subscript  since these values will be related to the point $w_\infty$ in the symbolic example in Theorem \ref{thmwo}.} We refer to Figure 2 for an illustration.  
\begin{figure}
\includegraphics[width=4in]{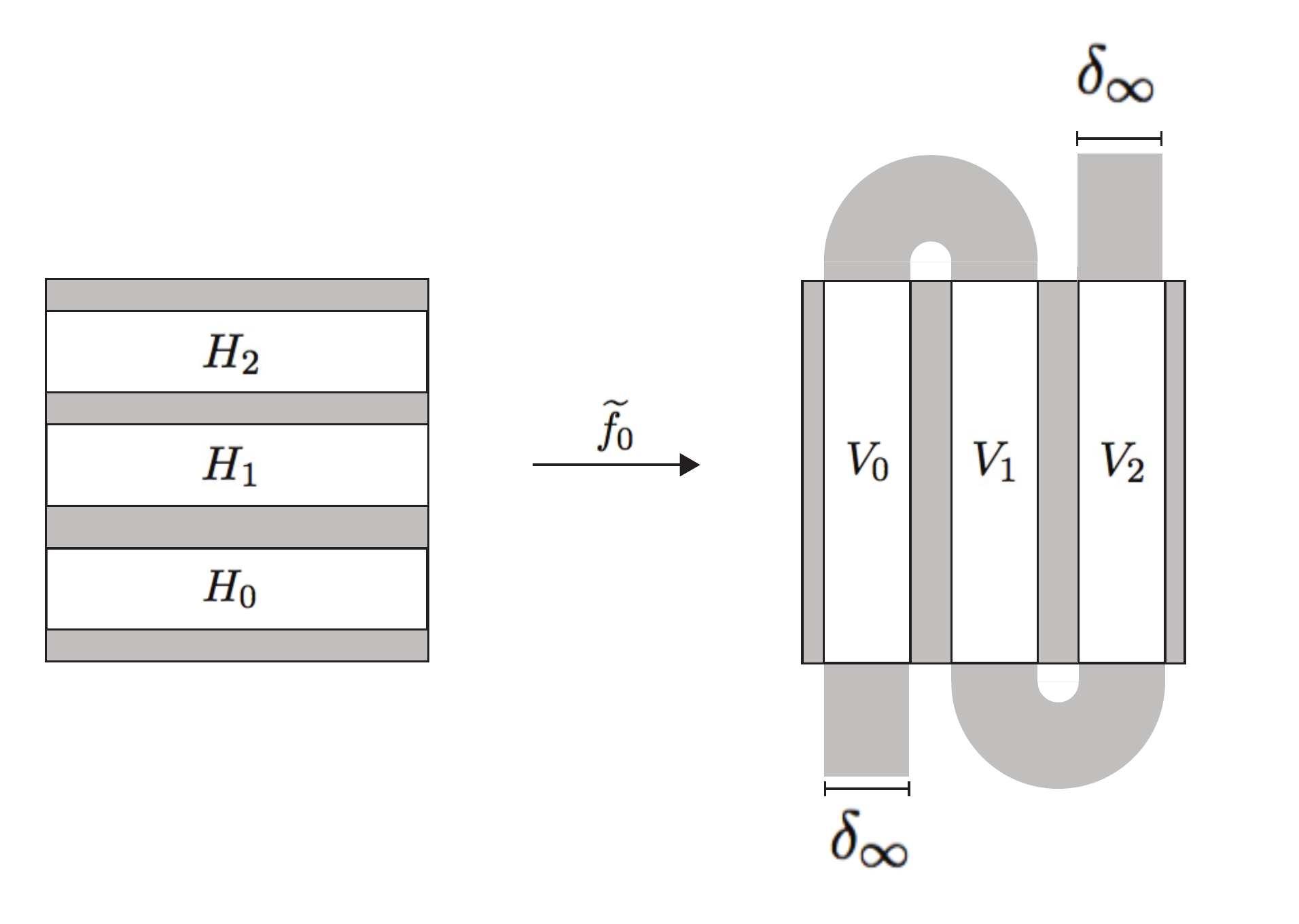}
\caption{Schematic representation of the standard horseshoe map on three symbols, $\widetilde{f_0}.$ Note that $k$ forward and backward iterations of $\widetilde{f_0}$ constrains $\widetilde{\Lambda_0}$ to $3^{2k}$ disjoint boxes in the unit square.}
\label{linearhorseshoe}
\end{figure}
Clearly,  the horizontal strips of size $1\times \l_\infty^{-1}$, denoted by $H_i$, are mapped  linearly onto the vertical strips, denoted by $V_i$, of size $\d_\infty\times 1$ for $i=0,1,2$.  It follows that  $\widetilde{\Lambda}_0=\{x\in \cS: \widetilde{f}_0^n(x)\in \cS  \mbox{ for all  }\, n\in \bZ\}$ is a basic set of $\widetilde{f_0}$. Moreover, the map $\widetilde{g_0}: \widetilde{\Lambda_0}\to \Sigma^\pm_3$, defined by $\widetilde{g_0}(x)_n=\xi_i$ iff $\widetilde{f_0}^n(x)\in H_i$,
provides a conjugacy between $\widetilde{f_0}\vert_ {\widetilde{\Lambda_0}}$ and $\sigma:\Sigma^\pm_3\to \Sigma^\pm_3$.

%We denote the hyperbolic splitting of the tangent space of $\widetilde{\Lambda}_0$ into stable and unstable directions by $\EE^s_0$ and $\EE^u_0$, respectively. Clearly, the stable and unstable spaces are horizontal and vertical lines respectively. Hence $\Phi_{L,\widetilde{f}_0}(x)=(-\log \d_\infty, \log \l_\infty)$  for all $x\in \widetilde{\Lambda}_0$.

%\compav{Remark why infinity subscript is relevant.}

Given $C^2$-functions $F$ and $\tilde{F}$ defined in  a neighborhood of $\cS$ we denote by $\Vert F-\tilde{F}\Vert_2=\Vert F-\tilde{F}\Vert_{2,\cS}$ the $C^2$-distance on $\cS$ of $F$ and $\tilde{F}$. 
%Recall that  $\widetilde{f}_0\vert_{\widetilde{\Lambda}_0}$ is $C^2$-structurally stable.

Fix $\alpha\in\bN$  (cf. Section 3.2) and $0<\delta_0<1$. Recall the definition of the set $X_0(\alpha)\subset \Sigma_3^+$ in equation \eqref{defphisets}. In the following we use the notation $\text{diag}(a,b)$ for the $2\times 2$ diagonal matrix with diagonal entries $a$ and $b$.
%$$\left( {\begin{array}{cc}
   %a & 0 \\
   %0 & b \\
  %\end{array} } \right)$.
To obtain $f_0$ we perform a $C^2$-perturbation  on $\widetilde{f}_0$ such that $f_0$ is a piecewise linear horseshoe map with the following properties:

\begin{enumerate}
\item[(1)] $f_0$ has a basic set $\Lambda_0\subset \inn \cS$ and $f_0\vert_{\Lambda_0}$ is topologically conjugate via $g_0:\Lambda_0\to \Sigma^\pm_3$ to $\sigma:\Sigma^\pm_3\to \Sigma^\pm_3$;
\item[(2)] $Df_0(x)=\pm\text{diag}(\d_0\d_\infty, \l_\infty)$, with the same sign as $D\widetilde{f}_0(y)$ whenever $g_0(x)=\widetilde{g_0}(y)$ and  $(\Pi\circ g_0)(x)\in X_0(\alpha)$;
\item[(3)]  $Df_0(x)=D\widetilde{f_0}(y)$ whenever $g_0(x)=\widetilde{g_0}(y)$ and  $(\Pi\circ g_0)(x)\notin X_0(\alpha)$. 
\end{enumerate}

To construct $f_0$ we pick $\d_0$ sufficiently close to $1$ and change the contraction rate from $\delta_\infty$ to $\delta_0\delta_\infty$ on those horizontal strips in $\cS$ whose symbolic forward trajectories mirror those of points in $X_0(\alpha)$. We denote the union of the corresponding horizontal strips by $\cS_0(\alpha)$. Hence,
\begin{equation}\label{eqrett1}
\{x\in \Lambda_0: (\Pi\circ g_0)(x)\in X_0(\alpha)\}\subset \cS_0(\alpha).
\end{equation}

Recall that $f_0\vert_{\Lambda_0}$ is  $\cC^2$ structurally stable, i.e., there exists an $\epsilon_0>0$ such that if $F:\cS\rightarrow \RR^2$ is a $\cC^2$ map that satisfies $\Vert F-f_0\Vert_2<\epsilon_0$, then $F\vert_{\Lambda_F}$ is topologically conjugate to $f_0\vert_{\Lambda_0}$, where $\Lambda_F=\{x\in\cS: F^n(x)\in\cS \mbox{ for all  }\, n\in\ZZ\}$. Moreover, by  making $\epsilon_0$ smaller if necessary, we can assure that whenever $\Vert F-f_0\Vert_2<\epsilon_0$ with $F\vert_{\cS_0(\alpha)}= f_0\vert_{\cS_0(\alpha)}$, then
\begin{equation}\label{eqfin1}
 DF(y)=\pm\text{diag}(\d_0\d_\infty, \l_\infty)
 \end{equation}
 for each $y\in \Lambda_F$ that is conjugate to  a point $x\in \Lambda_0$ with $(\Pi\circ g_0)(x)\in X_0(\alpha)$ (cf. property (2) in the definition of $f_0$).

Let $C>0$, $0<\theta<1$, $a=-\log\d_\infty$ and $b=-\log\d_0\d_\infty$. 
We fix  a strictly increasing and strictly concave function $h:[a,b]\to \bR$ with $h(a)=\log\l_\infty$ (cf. Section 2.3).

We will inductively construct a sequence of piecewise-linear horseshoe maps $f_k$, convergent in the $C^2$-norm, so that $f=\lim_{k\to\infty} f_k$ satisfies the assumptions in our Main Theorem.  More precisely, in order to construct $f_{k}$ we  perform perturbations (which we call surgeries) on $f_{k-1}$ on specific subsets of $\cS$ that contain all points $x\in \Lambda_{k-1}$ such that $\Pi\circ g_{k-1}(x)$ belongs to one of the following subsets of $\Sigma^+_3$ for $k\geq1$:
\begin{equation}
\begin{aligned}
U(k)&=X(k+\alpha)\backslash \cC_{k+\alpha-1}(1^{k+\alpha}),\\
V(k)&=X(k+\alpha)\cap \cC_{k+\alpha-1}(1^{k+\alpha}),
\end{aligned}
\end{equation}
where $g_{k-1}:\Lambda_{k-1}\to \Sigma^\pm_3$ is the corresponding conjugating map. The surgeries themselves will depend on the function $h$.

Suppose $f_{k-1}$ is a piecewise-linear horseshoe map with basic set $\Lambda_{k-1}=\bigcap_{j\in\ZZ}f^j_{k-1}(\cS)$ that is topological conjugate  to $\sigma:\Sigma_3^\pm\to \Sigma_3^\pm$ with conjugating map $g_{k-1}$. Define
\begin{equation}
W_k=\cS\cap f_{k-1}^{-(k+\alpha)}(\cS)\cap f_{k-1}^{k+\alpha}(\cS).
\end{equation}

Thus, $W_k$ contains $\ell_k=3^{2(k+\alpha)}$ disjoint closed horizontal rectangles $B_1,\dots, B_{\ell_k}$ such that $f_{k-1}$ is linear in each of the rectangles $B_i$ and 
\begin{equation}\label{eq8}
\Lambda_{k-1}\subset \bigcup_{i=1}^{\ell_k} \inn B_i.
\end{equation}
We further assume that $Df_{k-1}$ is a diagonal matrix on each of the rectangles $B_i$.
We refer to the rectangles $B_1,\dots, B_{\ell_k}$ as \emph{boxes} of level $k$ and write $\cB_k=\{B_i: i=1,\dots,\ell_k\}$.
Let $\cU_k$ be the collection of boxes  in $\cB_k$ such that $B\in \cU_k$ whenever $B\cap (\Pi\circ g_{k-1})^{-1}(U(k))\not=\emptyset$. Let $\cV_k$ be analogously defined, but the boxes in  $\cV_k$ satisfy $B\cap(\Pi\circ g_{k-1})^{-1}(V(k))\not=\emptyset$.
We define 
\begin{equation}\label{defgammak}
\gamma_k=  \frac{1}{3}\min_{B}d(\partial B, \Lambda_{k-1}\cap B),
  \end{equation}
  where the minimum is taken  over all boxes $B\in \cU_k\cup \cV_k$. By \eqref{eq8}, $ \gamma_k>0$.
For each of the boxes $B\in \cU_k\cup \cV_k$ we select  a closed concentric horizontal sub-box $B'\subset B$ such that $\mbox{dist}(B',\partial B)\geq2\gamma_k$ and $\mbox{dist}(B\setminus  B',\Lambda_{k-1}\cap B)\geq\gamma_k$. %\compav{(I wonder if we should take the closure of $B\setminus B'$?)} 
%\todo[inline]{This is okay.}
Here the notion of being concentric refers to the property that $B$ and $B'$ have the same center point.
We will make use of the following well-known result which is a consequence of the shadowing lemma and the structural stability of basic sets.

\begin{theorem}\label{thmclassic}
Let $F$ be a $\cC^2$-diffeomorphism defined on a neighborhood of $\cS$ with  basic set $\Lambda_F=\{x\in \cS: F^n(x)\in \cS\, \mbox{ for all  }\, n\in \bZ\}\subset \inn \cS$  Then for all $\delta>0$ there exists  $\epsilon>0$ such that if $\Vert F-\tilde{F}\Vert_2<\epsilon$, then $\tilde{F}$ has a basic set $\tilde{\Lambda}\subset \inn S$. Further, $F\vert_\Lambda$ is topological conjugate to $\tilde{F}\vert_{\tilde{\Lambda}}$ with conjugating map $G:\Lambda\to \tilde{\Lambda}$ such that 
$\Vert x-G(x)\Vert<\delta$ for all $x\in \Lambda$. In particular, $d_H(\Lambda, \tilde{\Lambda})<\delta$, where $d_H(.,.)$ is the Hausdorff metric of the set.
\end{theorem}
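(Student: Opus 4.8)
This is the classical statement that a locally maximal (basic) hyperbolic set, together with its conjugacy class, persists under $C^2$-perturbation, with the conjugacy taken $C^0$-close to the identity; it is a standard consequence of the shadowing lemma, expansivity, and the persistence of cone fields. We sketch the argument in the present setting. Write $\Lambda=\Lambda_F$. Since $\Lambda\subset\inn\cS$ is a compact hyperbolic set, fix a compact neighborhood $\cN$ with $\Lambda\subset\inn\cN\subset\cN\subset\inn\cS$ carrying a continuous, $DF$-adapted (stable/unstable) cone field with strict invariance and uniform expansion/contraction estimates. All three assertions will follow by combining persistence of this cone field with shadowing and expansivity; only $C^1$-closeness of $\tilde F$ to $F$ is actually needed.

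\emph{Step 1: persistence of the hyperbolic maximal invariant set.} Adaptedness of a cone field on the compact set $\cN$ is a $C^1$-open condition, so there is $\epsilon_1>0$ such that for $\Vert F-\tilde F\Vert_2<\epsilon_1$ the same cone field is $D\tilde F$-adapted on $\cN$; hence the maximal $\tilde F$-invariant set in $\cN$ is hyperbolic, with hyperbolicity constants uniform over this ball of perturbations. By upper semicontinuity of the maximal invariant set (if $x_j\to x$ and $\tilde F_j\to F$ with $\tilde F_j^{\,n}(x_j)\in\cS$ for all $n\in\bZ$, then $F^n(x)\in\cS$ for all $n$, whence $x\in\Lambda\subset\inn\cN$) there is $\epsilon_2\in(0,\epsilon_1]$ so that $\Vert F-\tilde F\Vert_2<\epsilon_2$ forces $\tilde\Lambda:=\{x\in\cS:\tilde F^{\,n}(x)\in\cS\ \forall n\in\bZ\}$ to lie in $\inn\cN$, and in particular to coincide with the maximal $\tilde F$-invariant set in $\cN$. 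Thus $\tilde\Lambda$ is a compact locally maximal hyperbolic set with $\tilde\Lambda\subset\inn\cS$.

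\emph{Step 2: the conjugacy via shadowing and expansivity.} Both $F|_\Lambda$ and $\tilde F|_{\tilde\Lambda}$ are hyperbolic with uniform constants, so they are expansive with a common expansivity constant $c>0$ and have the shadowing property with a shadowing function $\beta(\cdot)$ that can be chosen uniformly for all $\tilde F$ with $\Vert F-\tilde F\Vert_2<\epsilon_2$. Given the target $\delta>0$, replace it by $\delta/2$, shrink further so that $2\delta<c$ and the closed $\delta$-neighborhood of $\Lambda$ is contained in $\inn\cN$, and pick $\epsilon\in(0,\epsilon_2]$ with $\epsilon<\beta(\delta)$. For $x\in\Lambda$ the genuine orbit $(F^n x)_{n\in\bZ}\subset\Lambda$ is an $\epsilon$-pseudo-orbit for $\tilde F$, hence $\delta$-shadowed by a genuine $\tilde F$-orbit which stays in the $\delta$-neighborhood of $\Lambda$, so lies in $\tilde\Lambda$ and passes through a point $G(x)\in\tilde\Lambda$ that is unique because $2\delta<c$. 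Uniqueness forces $G$ to be continuous and to satisfy $G\circ F=\tilde F\circ G$. Running the same construction with $F$ and $\tilde F$ interchanged yields a continuous $H:\tilde\Lambda\to\Lambda$ with $H\circ\tilde F=F\circ H$; since $H\circ G$ and $G\circ H$ then $2\delta$-shadow the identity orbits, expansivity gives $H\circ G=\mathrm{id}_\Lambda$ and $G\circ H=\mathrm{id}_{\tilde\Lambda}$. Hence $G$ is a homeomorphism conjugating $F|_\Lambda$ to $\tilde F|_{\tilde\Lambda}$, so $\tilde F|_{\tilde\Lambda}$ inherits topological mixing and $\tilde\Lambda$ is a basic set of $\tilde F$. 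Finally $\Vert x-G(x)\Vert\le\delta<$ (the original) $\delta$ for all $x\in\Lambda$ by the choice of the shadowing distance, and since $\Lambda$ and $\tilde\Lambda$ correspond under $G$ with points moved by at most $\delta$, also $d_H(\Lambda,\tilde\Lambda)<\delta$.

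\emph{Main obstacle.} The two points requiring genuine care are the uniformity of hyperbolicity — and hence of $c$ and of $\beta(\cdot)$ — over the entire $C^2$-ball of perturbations, and the upper-semicontinuity argument of Step 1 guaranteeing that orbits trapped in $\cS$ by the perturbed map cannot leave the cone-field neighborhood $\cN$, i.e. that $\cS$ continues to isolate a hyperbolic set. Once these are secured, Step 2 is the standard shadowing-plus-expansivity derivation of topological stability, and the transfer of topological mixing is immediate from the conjugacy.
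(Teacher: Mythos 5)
The paper does not give a proof of Theorem~\ref{thmclassic}; it states it as a well-known consequence of the shadowing lemma and structural stability of basic sets. Your sketch is a correct and standard fleshing-out of exactly that route (cone-field persistence of uniform hyperbolicity, shadowing, expansivity, and the two-sided shadowing argument to get a conjugacy $C^0$-close to the identity), so it matches the approach the paper alludes to.
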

\noindent
Next we pick $d_k>0$ such that the $d_k$-neighborhood of $\Lambda_{k-1}$ is contained in 
\begin{equation}\label{defBprim}
\bigcup_{B\in \cU_k\cup \cV_k} B' \,\,\,\,\,\,\,\, \cup \,\,\,\,\,\, \bigcup_{B\in \cB_{k}\setminus (\cU_k\cup \cV_k)} B.
\end{equation}
We denote by $\cB_{k}'$ the collection of rectangles in \eqref{defBprim}.
 By applying Theorem \ref{thmclassic} to $f_{k-1}$ with $\delta=\frac{1}{2}\min\{\gamma_k, d_k\}$ we conclude that there exists $\epsilon>0$ such that  for any $C^2$ -perturbation $\epsilon$-close to $f_{k-1}$, the corresponding basic set $\Lambda$ satisfies 
 
 %\compav{Why isn't this $\cB_{k}'$? Changed, it was a typo.}
 \begin{equation}\label{pw-linearity}
 \Lambda\subset \bigcup_{B\in \cB_{k}'} \inn B. 
 \end{equation}
 % \compav{This incorporates the Lipschitz condition, and a definition of $\lambda_k$ according to $h$:}\\
 Next we select  $0<\delta_k<1$ and $\lambda_k>1$ such that for $x_k=-\log\d_k\d_\infty>a$ we have $h(x_k)=\log\l_k\l_\infty$ and  
 \begin{equation}\label{condi}
 \begin{aligned}
 \Vert  (x_k , h(x_k)) - (a, h(a))\Vert= & \, \, \Vert (-\log \delta_k, \log\lambda_k)\Vert < C\theta^k.
 \end{aligned}
 \end{equation}~\\
 By continuity of $h$ and the fact that $h$ is a strictly increasing function, such $\d_k$ and $\l_k$ exist. Moreover, the convergence of $\d_k$ to $1$ guarantees the convergence of $\l_k$ to $1$. 
 
 We define a local surgery map $\cL_k$ (of level $k$) depending on $\d_k$ and $\lambda_k$.
 Let  $\cL_k:\RR^2\rightarrow\RR^2$  be $C^2$-diffeomorphism with the following properties:

\begin{enumerate}[(i)]
\item $\cL_k\vert_{B'}$ is  linear for all sub-rectangles $B'$ of boxes  $B\in \cU_k\cup \cV_k$. More precisely, $\cL_k\vert_{B'}=\text{diag}(\d_k,\l_k)$ for all sub-rectangles $B'$ contained in boxes  in $\cV_k$, and $\cL_k\vert_{B'}=\text{diag}(\d_k,1)$ for all sub-rectangles contained in boxes in  $B\in\cU_k$.
\item $\cL_k$ is the identity map on $\RR^2\setminus \bigcup_{B \in \cU_k\cup\cV_k} B$, that is, the local surgeries are smoothed out in $B\setminus B'$ to the identity.
\item $\cL_k^{\pm 1}(B')$ is compactly contained in $B$ and $d(\partial\cL_k^{\pm 1}(B'), \partial B)>\gamma_k$ for all $B\in  \cU_k\cup \cV_k.$
\end{enumerate}
We refer to Figure 3 for an illustration.
\begin{figure}[ht]\label{fig3}
\includegraphics[width=4in]{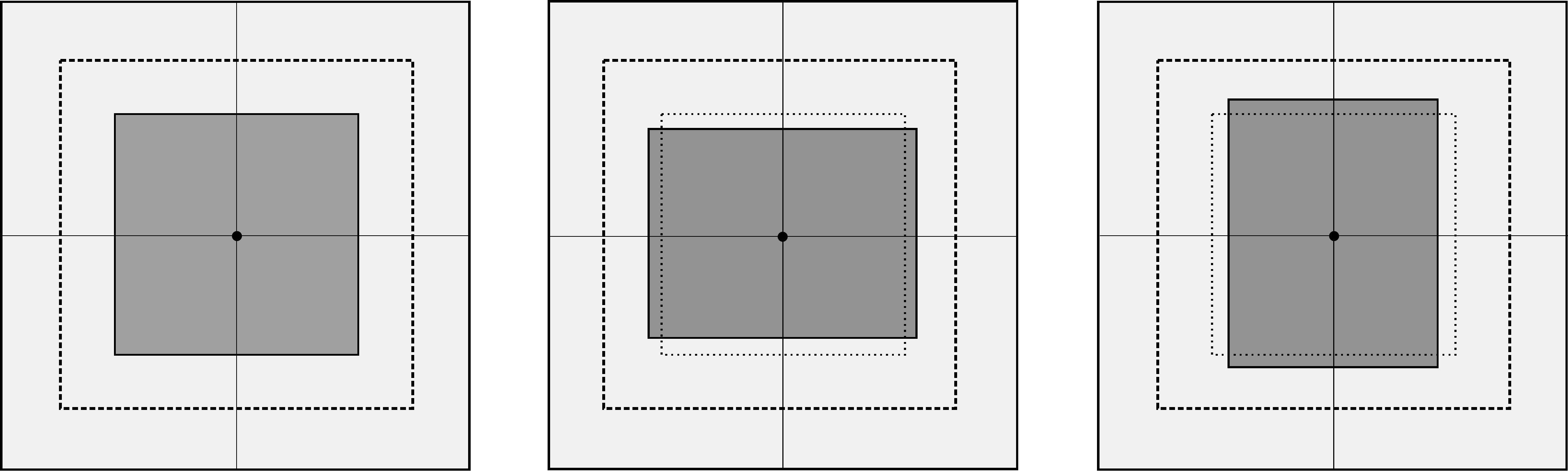}
\caption{The shaded box (left) is the concentric $B'$ contained in the larger, dotted $B$ box. Under the (local) linear perturbation  it remains in the  box $B$ under $\cL_k$ (middle) and under $\cL_k^{-1}$ (right).}
\end{figure}
We note that such a function $\cL_k$ can be obtained via piecewise polynomial interpolation, and conditions (i),(ii),(iii) and \eqref{condi} can be attained by choosing $\d_k,\l_k$ sufficiently close to $1$. 
%\todo[inline]{Pavel, can you please add a short explanation here why we can assure that $D\cL_k$ can be made close to the identity and $D^2\cL$ can be made close to zero.} 
 Let $I_2:\RR^2\to\RR^2$ denote the  identity map and   and let $Z_2\in L(\RR^2,L(\RR^2,\RR^2))$ denote the  zero map. Further, for a twice-differentiable function $F:\RR^2\to\RR^2$ we denote by $D^2F(x)$ the second derivative of  $F$ at $x$. It follows from the construction that it is possible to make $D\cL_k(x)$  arbitrarily close to $I_2$  and $D^2\cL_k(x)$ can be made arbitrarily close  $Z_2$. More precisely, we can assure the following property which follows from elementary arguments using (for example) polynomial interpolation.
\begin{property}\label{property1}
For all $\epsilon>0$ there exists $\delta>0$ such that for $1-\delta<\delta_k<1<\lambda_k<1+\delta$ there exists a $C^2$-diffeomorphism $\cL_k:\RR^2\rightarrow\RR^2$  satisfying  {\rm (i),(ii),(iii)} such that $\Vert \cL_k(x)-x\Vert, \Vert D\cL_k(x)-I_2\Vert, \Vert D^2\cL_k(z)- Z_2\Vert<\epsilon$ for all $x\in \bR^2$, and condition \eqref{condi} holds.
\end{property}
We leave the details of the construction of the map $\cL_k$ satisfying Property \ref{property1} to the interested reader. Finally, we define $f_k=f_{k-1}\circ \cL_k$.

%\begin{figure}
%\includegraphics[width=2in]{images/UKSurgery.pdf}
%\caption{Local surgery as piecewise polynomials. Outer box is $B'_{\gamma_k}$. Lightly shaded regions are $A_\delta$ (left) and $A_\lambda$ (right). Darker shaded regions are $B'$ (left) and $\psi_{U_k}(B')$ (right).}
%\label{fig:pw-polynomial}
%\end{figure}

\begin{proposition}\label{pro34}
There exist constants $0<\d_k<1<\l_k$ such that condition $\eqref{condi}$ is satisfied, and $f_k$ is a $3$-fold horseshoe map with basic set $\Lambda_k\subset \inn \cS$ such that $f_k\vert_{\Lambda_k}$ is piecewise linear in a neighborhood of $\Lambda_k$ with diagonal matrix derivatives. Moreover, $f_k\vert_{\Lambda_k}$ is topologically conjugate to $f_{k-1}\vert_{\Lambda_{k-1}}$ and $\Vert f_k-f_{k-1}\Vert_2<\left(\frac{1}{2}\right)^{k+1}\epsilon_0$
\end{proposition}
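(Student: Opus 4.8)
The plan is to combine the structural-stability input (Theorem \ref{thmclassic} and the inclusion \eqref{pw-linearity}) with the smallness control of Property \ref{property1}. First I would fix $\epsilon>0$ small enough that it serves simultaneously for three purposes: it is smaller than the $\epsilon$ produced by applying Theorem \ref{thmclassic} to $f_{k-1}$ with $\delta=\tfrac12\min\{\gamma_k,d_k\}$ (so that the new basic set satisfies \eqref{pw-linearity}); it is at most $\left(\tfrac12\right)^{k+1}\epsilon_0$; and, via Property \ref{property1} applied with this $\epsilon$, it forces the existence of $0<\delta_k<1<\lambda_k$ (hence of the diffeomorphism $\cL_k$) for which \eqref{condi} holds and $\Vert\cL_k(x)-x\Vert,\Vert D\cL_k(x)-I_2\Vert,\Vert D^2\cL_k(x)-Z_2\Vert<\epsilon$ for all $x$. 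Since $\cL_k$ is the identity outside a bounded set and $f_{k-1}$ is $C^2$ on a neighborhood of $\cS$, a routine chain-rule estimate on the compact set $\cS$ gives $\Vert f_k-f_{k-1}\Vert_2=\Vert f_{k-1}\circ\cL_k-f_{k-1}\Vert_2<\left(\tfrac12\right)^{k+1}\epsilon_0$ once $\epsilon$ is taken small enough (the bound depends only on the $C^2$-norm of $f_{k-1}$ on $\cS$, which is fixed at this stage of the induction). This also makes $\cL_k$ a $C^2$-diffeomorphism close to the identity, so $f_k=f_{k-1}\circ\cL_k$ is a $C^2$-diffeomorphism $\epsilon$-close to $f_{k-1}$ in the $C^2$-norm.

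Next I would verify that $f_k$ is a $3$-fold horseshoe map with the claimed basic set. Since $\Vert f_k-f_{k-1}\Vert_2<\epsilon$ with $\epsilon$ below the structural-stability threshold from Theorem \ref{thmclassic}, $f_k$ has a basic set $\Lambda_k=\bigcap_{j\in\ZZ}f_k^j(\cS)\subset\inn\cS$, and $f_k\vert_{\Lambda_k}$ is topologically conjugate to $f_{k-1}\vert_{\Lambda_{k-1}}$ (hence to the full shift $\sigma:\Sigma_3^\pm\to\Sigma_3^\pm$ and to a $3$-fold horseshoe). Moreover, by the choice of $\epsilon$, $\Lambda_k\subset\bigcup_{B\in\cB_k'}\inn B$, i.e. $\Lambda_k$ avoids the transition regions $B\setminus B'$ of the boxes in $\cU_k\cup\cV_k$ and avoids the complement of the remaining boxes of $\cB_k$. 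On each sub-rectangle $B'$ (with $B\in\cU_k\cup\cV_k$) the map $\cL_k$ is linear with diagonal derivative $\mathrm{diag}(\delta_k,\lambda_k)$ or $\mathrm{diag}(\delta_k,1)$ by property (i), while on the boxes of $\cB_k\setminus(\cU_k\cup\cV_k)$ it is the identity by property (ii); on all of these regions $f_{k-1}$ is already piecewise linear with diagonal derivative by the inductive hypothesis (the boxes of $\cB_k$ are among those on which $Df_{k-1}$ is diagonal). Hence on a neighborhood of $\Lambda_k$ the composition $f_k=f_{k-1}\circ\cL_k$ is piecewise linear with diagonal-matrix derivative: indeed $Df_k(x)=Df_{k-1}(\cL_k(x))\,D\cL_k(x)$ is a product of two diagonal matrices, and one must check that $\cL_k(x)$ lands in a box where $f_{k-1}$ is linear — this is exactly what property (iii), $\cL_k^{\pm1}(B')\Subset B$ with $d(\partial\cL_k^{\pm1}(B'),\partial B)>\gamma_k$, together with \eqref{pw-linearity} guarantees, since points of $\Lambda_k$ lie in the $B'$'s (or in untouched boxes) and their $\cL_k$-images stay inside the corresponding $B$'s, on which $f_{k-1}$ is linear.

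Finally, condition \eqref{condi} is satisfied by the very choice of $\delta_k,\lambda_k$ at the start (this is part of the conclusion of Property \ref{property1} with the selected $\epsilon$), and $\lambda_k\to1$ as $\delta_k\to1$ was already recorded in the text. I expect the main obstacle to be the bookkeeping in the last paragraph: one has to argue carefully that, for every $x$ in (a neighborhood of) $\Lambda_k$, the point $\cL_k(x)$ lies in a region on which $f_{k-1}$ is \emph{linear}, not merely $C^2$, so that $f_k$ is genuinely piecewise linear near $\Lambda_k$ with diagonal derivatives — this requires tracking how $\cL_k$ moves the sub-boxes $B'$ inside the parent boxes $B$ (property (iii)) and invoking \eqref{pw-linearity} to know that $\Lambda_k$ sits inside the $\cB_k'$-rectangles in the first place. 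The $C^2$-norm estimate $\Vert f_k-f_{k-1}\Vert_2<\left(\tfrac12\right)^{k+1}\epsilon_0$ is then a routine application of the product rule on the compact set $\cS$ using Property \ref{property1}.
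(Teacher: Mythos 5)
Your proposal is correct and follows essentially the same route as the paper: fix $\epsilon$ below the structural-stability threshold from Theorem~\ref{thmclassic} (with $\delta=\tfrac12\min\{\gamma_k,d_k\}$), use Property~\ref{property1} to choose $\delta_k,\lambda_k$ and $\cL_k$, and then bound $\Vert f_k-f_{k-1}\Vert_2$ by chain-rule/mean-value estimates whose constants depend only on $f_{k-1}$. Where you write ``a routine chain-rule estimate,'' the paper spells out the three displayed inequalities \eqref{eq111}, \eqref{eq122}, \eqref{eq123} (the last also uses uniform continuity of $D^2f_{k-1}$ on the compact $\cS$ to absorb the $\Vert D^2f_{k-1}(q)-D^2f_{k-1}(x)\Vert$ term, via $\Vert q-x\Vert$ small); conversely, you are considerably more explicit than the paper on the piecewise-linearity claim, correctly invoking property~(iii) and \eqref{pw-linearity} to ensure that $\cL_k$-images of points near $\Lambda_k$ land in regions where $f_{k-1}$ is linear, whereas the paper simply says this ``follows from the construction.''
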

\begin{proof}
First we notice that $f_{k}$ and $f_{k-1}$  only differ on the boxes in $ \cU_k\cup\cV_k$. Therefore, it suffices to consider a fixed box $B\in \cU_k\cup\cV_k$. 
Further, the $C^2$-norm changes occurring in the boxes in $\cV_k$ are greater or equal than the corresponding changes in the boxes in $\cU_k$  because of the additional expansion in the vertical direction. 
Thus, without loss of generality we may assume $B\in \cV_k$.
Let $x\in B$, and let $q=\cL_k(x)$. Applying the Mean Value Theorem yields
\begin{equation} \label{eq111}
\begin{split}
\Vert f_k(x)-f_{k-1}(x)\Vert &=  \Vert f_{k-1}(\cL_k(x))-f_{k-1}(x)\Vert \\
&\leq M_0 \Vert\cL_k(x)-x\Vert,
\end{split}
\end{equation}
 where $M_0$ is an upper bound of $\Vert Df_{k-1}\Vert$ on $\cS$. It follows from Property \ref{property1} that $||\cL_k(x)-x||$ can be made arbitrarily small provided $\d_k$ and $\l_k$ are sufficiently close to $1$. 
For the  derivatives  of $f_k$ and $f_{k-1}$, we have
\begin{equation}\label{eq122}
\begin{split}
\Vert Df_{k}(x)-Df_{k-1}(x)||&\leq
 \Vert Df_{k-1}(q)\circ D\cL_k(x)-Df_{k-1}(x)\circ D\cL_k(x)\Vert \\
&+ \Vert Df_{k-1}(x)\circ D\cL_k(x) - Df_{k-1}(x)\circ I_2\Vert \\
&\leq \Vert D\cL_k(x)\Vert \cdot \Vert Df_{k-1}(q)-Df_{k-1}(x)\Vert  \\
&+ \Vert Df_{k-1}(x)\Vert\cdot \Vert D\cL_{k-1}(x)-I_2\Vert\\
&\leq M_1  \Vert Df_{k-1}(q)-Df_{k-1}(x)\Vert + M_0  \Vert D\cL_{k}(x)-I_2\Vert,
\end{split}
\end{equation}
where $M_1$ is an upper bound for $\Vert D\cL_k\Vert$ on $\cS$. Again, by Property \ref{property1} for $\delta_k$ and $\lambda_k$ sufficiently close to $1$, it is possible to make $\Vert q-x\Vert=\Vert \cL_k(x)-x\Vert $ and  $\Vert D_x\cL_k-I_2||$
as small as necessary.
 Finally we consider the second derivatives of $f_k$ and $f_{k-1}$. Let $M_2$ be an upper bound for $D^2f_{k-1}$ on $\cS$. 
 We have 
 \begin{equation}\label{eq123}
 \begin{aligned}
 & \Vert D^2f_k(x)-D^2f_{k-1}(x)\Vert   =    \Vert D^2(f_{k-1}\circ \cL_k)(x) -D^2f_{k-1}(x)\Vert \\
  =& \Vert D^2f_{k-1}(q)( D\cL_k(x)\circ D\cL_k(x))+ Df_{k-1}(q)\circ D^2\cL_k(x)-D^2f_{k-1}(x)\Vert \\
  \leq &  2 M_2\Vert D\cL_k(x)\circ D\cL_k(x)-I_2\circ I_2\Vert+M_1 \Vert D^2\cL_k(x)\Vert,
 \end{aligned}
 \end{equation}    
 provided $\Vert q-x\Vert$ is sufficiently small which can be assured by Property \ref{property1}.
  Moreover, again by Property \ref{property1} (i.e. by selecting $\delta_k$ and $\lambda_k$ sufficiently close to $1$) it is possible to make $\Vert D\cL_k(x)-I_2\Vert$ arbitrarily small. By the same argument $\Vert D^2\cL_k(x)-Z_2\Vert$ can be made arbitrarily small. Let $\epsilon_k=\min\{\epsilon,\left(\frac{1}{2}\right)^{k+1}\epsilon_0\}$, where $\epsilon$ is chosen to satisfy condition \eqref{pw-linearity}. It now follows from equations \eqref{eq111},\eqref{eq122},\eqref{eq123} that there exists $\delta_k<1<\lambda_k$  and a corresponding map $\cL_k$ so that $\Vert f_k-f_{k-1}\Vert_2<\epsilon_k$ holds. We use this surgery map $\cL_k$ to define $f_k$. We conclude that $f_k$  has a basic set $\Lambda_k\subset \inn\cS$ such that $f_k\vert_{\Lambda_k}$ is topologically conjugate to $f_{k-1}\vert_{\Lambda_{k-1}}$. Moreover, piecewise linearity of $f_k$ with diagonal matrix derivatives follows from the construction of $f_k$ and the fact that $\Lambda_k\subset \bigcup_{B\in\cB_{k}'} B$.
  \end{proof}
From now on we additionally assume (without loss of generality) that the following holds:
\begin{equation}
\d_{k_1}<\d_{k_2}<1<\l_{k_2}<\l_{k_1} \qquad \mbox{whenever} \quad k_1<k_2,
\end{equation}
\begin{equation}\label{lem:monotonedecreasing}
\epsilon_{k_2}<\epsilon_{k_2}\quad \mbox{and} \quad \gamma_{k_2}<\gamma_{k_1} \qquad \mbox{whenever} \quad k_1<k_2.
\end{equation}
Next we list several facts that are immediate consequences of the construction of the horseshoe maps $f_k$.
\begin{corollary}\label{cor23}
Let $1\leq k_1<k_2$ and let $B^i\in \cU_i\cup \cV_i$ for $i=k_1,k_2$. Then $B^{k_1}\cap B^{k_2}=\emptyset.$
Moreover, $(f_k)_k$ is a Cauchy sequence with respect to the  $C^2$-norm on $\cS$.
\end{corollary}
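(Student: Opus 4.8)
The first assertion says that the surgery boxes used at two distinct levels are pairwise disjoint. The plan is to reduce this to a statement purely about the symbolic cylinders that the boxes subtend. Recall that a box $B\in\cU_k\cup\cV_k$ is a box of level $k$ that meets $(\Pi\circ g_{k-1})^{-1}(U(k)\cup V(k))$, and that $U(k)\cup V(k)=X(k+\alpha)$. So a point whose forward itinerary (under $\Pi\circ g_{k-1}$) lies in the preimage of such a box has symbol block of the form $\xi_0\cdots\xi_{k+\alpha-1}\in S^{k+\alpha}$ followed by $\xi_{k+\alpha}=2$; in particular, the first occurrence of the symbol $2$ happens at coordinate exactly $k+\alpha$. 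First I would observe that, by \eqref{pw-linearity} applied inductively, the conjugacies $g_{k-1}$ and $g_{k_2-1}$ agree on the common basic set up to the identifications made at the earlier levels, and more importantly that the level-$k$ box $B$ contains all of $\Lambda_{k-1}\cap B$ in its interior and $\Lambda_{k-1}$ is dense in the relevant Cantor structure, so $B$ is determined (as the $(k+\alpha)$-th generation horizontal--vertical box of the Markov partition) by the symbol block $\xi_{-(k+\alpha)}\cdots\xi_{k+\alpha}$. Hence if $B^{k_1}\in\cU_{k_1}\cup\cV_{k_1}$ and $B^{k_2}\in\cU_{k_2}\cup\cV_{k_2}$ had nonempty intersection, a point in the intersection would have a forward itinerary whose first symbol-$2$ occurs at coordinate $k_1+\alpha$ and also at coordinate $k_2+\alpha$; since $k_1+\alpha\ne k_2+\alpha$, this is impossible. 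This gives $B^{k_1}\cap B^{k_2}=\emptyset$.

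The one point needing care is that the boxes $B^{k_1}$, $B^{k_2}$ are boxes for the partitions associated to $f_{k_1-1}$ and $f_{k_2-1}$ respectively, which are a priori different maps. Here I would use Proposition \ref{pro34} together with \eqref{pw-linearity}: each surgery $\cL_j$ is supported inside $\bigcup_{B\in\cU_j\cup\cV_j}B\setminus B'$ and leaves a neighborhood of $\Lambda_{j-1}$ untouched in the sense that $f_j$ is again piecewise linear with the same combinatorics, so the Markov boxes of level $m$ for $f_{k-1}$ and for $f_{k'-1}$ coincide for all $m\le\min\{\text{relevant levels}\}$ once we are inside the region where no further surgery has yet acted. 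Concretely, for $k_1<k_2$ the box $B^{k_1}$ sits at generation $k_1+\alpha$, and all surgeries of level $>k_1$ are performed on strictly finer boxes ($\cL_j$ for $j>k_1$ is supported inside sub-boxes $B'$ of generation-$(j+\alpha)$ boxes, which by Corollary's own first claim at the level of generations are disjoint from the generation-$(k_1+\alpha)$ cells unless nested strictly inside); thus $B^{k_1}$ is still a genuine box of the Markov partition for $f_{k_2-1}$, and the symbolic-itinerary argument above applies directly.

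For the second assertion, that $(f_k)_k$ is Cauchy in the $C^2$-norm on $\cS$: this is immediate from Proposition \ref{pro34}, which gives $\Vert f_k-f_{k-1}\Vert_2<(\tfrac12)^{k+1}\epsilon_0$. For $m<n$ the triangle inequality yields
\[
\Vert f_n-f_m\Vert_2\le\sum_{k=m+1}^{n}\Vert f_k-f_{k-1}\Vert_2<\sum_{k=m+1}^{\infty}\left(\tfrac12\right)^{k+1}\epsilon_0=\left(\tfrac12\right)^{m+1}\epsilon_0,
\]
which tends to $0$ as $m\to\infty$. Hence $(f_k)_k$ is Cauchy in the (complete) space of $C^2$-maps on $\cS$, so it converges; this is exactly what is needed to define $f=\lim_{k\to\infty}f_k$ in the sequel.

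\medskip
The main obstacle is the first part — making rigorous that boxes defined with respect to the different maps $f_{k_1-1}$ and $f_{k_2-1}$ can be compared at all, i.e. that the surgeries of intermediate levels do not disturb the coarse Markov combinatorics. Once one pins down that the generation-$m$ Markov cells are preserved by all surgeries of level exceeding the one creating that cell (which follows from the supports of the $\cL_j$ being concentric sub-boxes kept away from $\partial B$ by $\gamma_j$, together with the monotonicity \eqref{lem:monotonedecreasing}), the disjointness is just the observation that a symbol sequence cannot have its first $2$ in two different positions.
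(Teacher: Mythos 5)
The Cauchy assertion is handled correctly and completely: $\sum_k\Vert f_k-f_{k-1}\Vert_2$ converges geometrically by Proposition~\ref{pro34}, and the space of $C^2$ maps on $\cS$ is complete, so nothing to add there.

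For the disjointness, your central idea---that a forward itinerary cannot have its first symbol $2$ at two different coordinates, so the cylinders $X(k_1+\alpha)$ and $X(k_2+\alpha)$ are disjoint---is the right one, and you correctly flag the genuine subtlety that $B^{k_1}$ and $B^{k_2}$ live in Markov partitions attached to different maps $f_{k_1-1}$ and $f_{k_2-1}$. Two points in your resolution need repair. First, the statement that ``each surgery $\cL_j$ is supported inside $\bigcup_{B\in\cU_j\cup\cV_j}B\setminus B'$ and leaves a neighborhood of $\Lambda_{j-1}$ untouched'' misreads conditions~(i)--(iii): $\cL_j$ is the identity only \emph{off} $\bigcup_{B\in\cU_j\cup\cV_j}B$, and on each inner sub-box $B'$ (which is precisely where $\Lambda_{j-1}\cap B$ sits) it equals the nontrivial diagonal map $\text{diag}(\delta_j,\cdot)$, so it does displace points of $\Lambda_{j-1}$; the smoothing on $B\setminus B'$ interpolates between that linear map and the identity, not between the identity and itself. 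Consequently the level-$k_1$ boxes of $f_{k_1-1}$ and of $f_{k_2-1}$ do \emph{not} literally coincide, so ``$B^{k_1}$ is still a genuine box of the Markov partition for $f_{k_2-1}$'' is not quite what you get. What is actually available, and is what you should use, is the quantitative control coming from Theorem~\ref{thmclassic} applied with $\delta=\tfrac12\min\{\gamma_j,d_j\}$ together with conditions~(iii) and \eqref{pw-linearity}: each step displaces the basic set and its conjugacy by less than $\tfrac12\gamma_j$, and these bounds are monotone in $j$ by \eqref{lem:monotonedecreasing}. Second, the parenthetical ``which by Corollary's own first claim at the level of generations are disjoint'' is circular as written---you are invoking the statement being proved. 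This should be replaced by either an explicit induction on $k_2-k_1$, or, more directly, by the displacement bound just described: a point $y\in\Lambda_{k_2-1}\cap B^{k_2}$ whose forward itinerary lies in $X(k_2+\alpha)$ corresponds under the chain of conjugacies to a point $z\in\Lambda_{k_1-1}$ within $\gamma_{k_1}$ of $y$ and with symbol in $\{0,1\}$ at position $k_1+\alpha$; hence $z$ lies in a level-$k_1$ box that is not in $\cU_{k_1}\cup\cV_{k_1}$ and is separated from $B^{k_1}$ by at least $3\gamma_{k_1}$ (by the definition \eqref{defgammak} of $\gamma_{k_1}$ and the disjointness of the boxes in $\cB_{k_1}$), which forces $B^{k_2}\cap B^{k_1}=\emptyset$. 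With these two corrections your argument goes through.
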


\begin{corollary}
The sequence $(f_k)_k$ converges in the $\cC^2$-norm on $\cS$ to a $3$-fold horseshoe  map $f$ with a basic set $\Lambda\subset \inn \cS$. Moreover, $f\vert_{\Lambda}$ is topologically conjugate to $f_0\vert_{\Lambda_0}$.
\end{corollary}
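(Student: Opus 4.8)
The plan is to deduce this corollary from Proposition~\ref{pro34} together with the classical structural-stability result Theorem~\ref{thmclassic}, essentially by assembling the pieces already set up in the construction. First I would observe that by Proposition~\ref{pro34} we have $\Vert f_k - f_{k-1}\Vert_2 < \left(\frac12\right)^{k+1}\epsilon_0$ for every $k\geq 1$, so for $m>n$ the triangle inequality gives $\Vert f_m - f_n\Vert_2 \leq \sum_{k=n+1}^{m}\left(\frac12\right)^{k+1}\epsilon_0 < \left(\frac12\right)^{n}\epsilon_0$. Hence $(f_k)_k$ is Cauchy in the $C^2$-norm on $\cS$ (this is also recorded in Corollary~\ref{cor23}), and since the space of $C^2$-maps $\cS\to\bR^2$ is complete, the limit $f=\lim_{k\to\infty}f_k$ exists and is a $C^2$-map defined in a neighborhood of $\cS$.

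Next I would argue that $f$ is a $3$-fold horseshoe map with a basic set $\Lambda\subset\inn\cS$ topologically conjugate to $f_0\vert_{\Lambda_0}$. The point is that the total $C^2$-distance from $f$ to $f_0$ is controlled: $\Vert f - f_0\Vert_2 \leq \sum_{k\geq 1}\left(\frac12\right)^{k+1}\epsilon_0 = \frac12\epsilon_0 < \epsilon_0$, where $\epsilon_0$ is the structural-stability radius of $f_0\vert_{\Lambda_0}$ fixed earlier in the construction. By the structural stability of $f_0$ (equivalently, by applying Theorem~\ref{thmclassic} with $F=f_0$), it follows that $f$ has a basic set $\Lambda=\{x\in\cS: f^n(x)\in\cS \text{ for all } n\in\bZ\}\subset\inn\cS$ and that $f\vert_\Lambda$ is topologically conjugate to $f_0\vert_{\Lambda_0}$; in particular $f\vert_\Lambda$ is conjugate to $\sigma:\Sigma_3^\pm\to\Sigma_3^\pm$, so $f$ is a $3$-fold horseshoe map. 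One should also note that $\epsilon_k\leq\epsilon$ was chosen at each stage to satisfy \eqref{pw-linearity}, so each intermediate $f_k$ is genuinely a horseshoe; but the conjugacy of the \emph{limit} to $f_0$ follows directly from the single application of structural stability to $f_0$ with radius $\epsilon_0$.

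The only genuinely delicate point — and the step I expect to require the most care — is verifying that the limiting basic set is exactly the descending intersection of the $\Lambda_k$'s in the appropriate sense, i.e. that the conjugacies $g_k$ stabilize to a well-defined conjugacy $g:\Lambda\to\Sigma_3^\pm$ compatible with the symbolic coding used throughout Section~\ref{sec:2}. This is needed because the whole point of the construction is that the surgeries of level $k$ act on boxes indexed by cylinders of length $k+\alpha$, and one must know that a point of $\Lambda$ and its image under the conjugacy ``sees'' the same surgeries as the corresponding point of $\Lambda_k$ for all large $k$. Here one uses \eqref{pw-linearity}: at stage $k$ the perturbation is $\epsilon$-small with $\delta=\frac12\min\{\gamma_k,d_k\}$, so $\Lambda\subset\bigcup_{B\in\cB_k'}\inn B$, and since $\cL_j$ for $j>k$ is the identity outside the level-$j$ boxes (which by Corollary~\ref{cor23} are disjoint from the level-$k$ boxes), the coding of $\Lambda$ relative to the level-$k$ partition agrees with that of $\Lambda_k$. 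Combined with Theorem~\ref{thmclassic}, which provides a conjugacy $G_k:\Lambda_k\to\Lambda$ moving points by less than $\frac12\min\{\gamma_k,d_k\}\to 0$, this identifies the symbolic coding in the limit and yields the conjugacy $g=\lim_k g_k\circ G_k^{-1}$. The remaining assertions about piecewise linearity of $f$ on $\Lambda$ with diagonal derivatives then follow because, for each $k$, the point $x\in\Lambda$ lies in a box on which only finitely many surgeries act nontrivially, each preserving piecewise linearity and diagonality — but I would defer the precise bookkeeping of this to the subsequent analysis of $\Phi_L$ where it is actually used.
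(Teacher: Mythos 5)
Your proof is correct and follows essentially the same route as the paper: $(f_k)$ is Cauchy in $C^2$ because of the geometric bound $\Vert f_k-f_{k-1}\Vert_2<\left(\frac12\right)^{k+1}\epsilon_0$ from Proposition~\ref{pro34}, and the limit $f$ lies within $\frac12\epsilon_0$ of $f_0$ in $C^2$, so structural stability of $f_0$ gives the basic set $\Lambda$ and the conjugacy to $f_0\vert_{\Lambda_0}$. The additional discussion about stabilization of the symbolic codings and piecewise linearity is not needed to prove this corollary (it is addressed separately in Corollary~\ref{cor:derivativeoflimit} and Lemma~\ref{lem:limithorseshoederivative}), but your instinct that it is the delicate point of the construction is sound.
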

\begin{proof}
The statement that $f=\lim_{k\to\infty} f_k$ is a well-defined limit in the $C^2$-norm on $\cS$ follows from Corollary \ref{cor23}. To establish the topological conjugacy we note that 
\begin{equation}
\Vert f-f_0\Vert_2\leq \sum_{k=1}^\infty \Vert f_k-f_{k-1}\Vert_2\leq \sum_{k=1}^\infty \epsilon_k\leq \frac{1}{2}\epsilon_0, 
\end{equation}
which proves the claim.
\end{proof}
Let $g:\Lambda\to  \Sigma_3^\pm$ denote the conjugating map between $f\vert_\Lambda$ and $\sigma: \Sigma_3^\pm\to \Sigma_3^\pm$.

%\compav{Can we get this to be $0\leq k_1<k_2$? Also what is the $k$ in this corollary? It has no subscript.}

\begin{corollary}\label{cor:derivativeoflimit}
Let $1\leq k_1<k_2$. Suppose $x\in \Lambda, y_1\in \Lambda_{k_1}$ and $y_2\in \Lambda_{k_2}$ with $g(x)=g_{k_1}(y_1)=g_{k_2}(y_2)$. Then $\Vert x-y_1\Vert< \frac{1}{2}\gamma_{k_1}$ and $\Vert y_1-y_2\Vert< \frac{1}{2}\gamma_{k_1}$.

\end{corollary}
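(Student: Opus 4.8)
The plan is to exploit the fact that the conjugacies are all built via the shadowing/structural stability theorem (Theorem~\ref{thmclassic}) with explicitly controlled error bounds, and then to telescope. First I would note that by construction $f_{k_1}$ is obtained from $f_{k_1-1}$ by the surgery $\cL_{k_1}$, and more relevantly, for the comparison of $\Lambda$ with $\Lambda_{k_1}$ one observes that $f=\lim_{j\to\infty}f_j$ and $\Vert f-f_{k_1}\Vert_2\le\sum_{j>k_1}\Vert f_j-f_{j-1}\Vert_2\le\sum_{j>k_1}\epsilon_j$. Recall that when choosing $\epsilon$ in Proposition~\ref{pro34} (equivalently, in the application of Theorem~\ref{thmclassic} that precedes \eqref{pw-linearity}), the perturbation at level $k_1$ was set up with $\delta=\tfrac12\min\{\gamma_{k_1},d_{k_1}\}$, so that any $C^2$-perturbation $\epsilon$-close to $f_{k_1-1}$ has its basic set within Hausdorff distance $\tfrac12\gamma_{k_1}$ of $\Lambda_{k_1-1}$, and more precisely the conjugating map moves points by less than $\tfrac12\gamma_{k_1}$. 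The key point is that this same $\delta$ governs \emph{all} subsequent perturbations $f_j$ with $j>k_1$ relative to $f_{k_1}$, because the $\epsilon_j$ were chosen summably small (each $\epsilon_j\le (\tfrac12)^{j+1}\epsilon_0$ and, by \eqref{lem:monotonedecreasing}, also $\epsilon_j<\epsilon$ for the relevant $\epsilon$'s) precisely so that $f$ remains within the structural-stability neighborhood of every $f_{k_1}$.

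Concretely, I would argue as follows. Fix $k_1$. Since the surgeries $\cL_j$ for $j>k_1$ are supported on the boxes in $\cU_j\cup\cV_j$, which by Corollary~\ref{cor23} are disjoint from the level-$k_1$ boxes $\cU_{k_1}\cup\cV_{k_1}$, and since $\Vert f-f_{k_1}\Vert_2$ is bounded by the tail $\sum_{j>k_1}\epsilon_j$, which is smaller than the $\epsilon$ produced by Theorem~\ref{thmclassic} applied to $f_{k_1}$ with $\delta=\tfrac12\gamma_{k_1}$ (this is where the monotonicity \eqref{lem:monotonedecreasing} and the geometric decay of the $\epsilon_j$ are used), the theorem yields a conjugacy $G:\Lambda_{k_1}\to\Lambda$ with $\Vert z-G(z)\Vert<\tfrac12\gamma_{k_1}$ for all $z\in\Lambda_{k_1}$. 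Taking $z=y_1$ (the point with $g_{k_1}(y_1)=g(x)$, so that $G(y_1)=x$ since $G$ intertwines the symbolic codings), we get $\Vert x-y_1\Vert<\tfrac12\gamma_{k_1}$. For the second inequality, apply the same reasoning with $f_{k_1}$ in place of $f$: now $\Vert f_{k_2}-f_{k_1}\Vert_2\le\sum_{j=k_1+1}^{k_2}\epsilon_j<\epsilon$, so the conjugacy $\Lambda_{k_1}\to\Lambda_{k_2}$ moves points by less than $\tfrac12\gamma_{k_1}$, giving $\Vert y_1-y_2\Vert<\tfrac12\gamma_{k_1}$.

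The one subtlety I would be careful about is making sure the $\epsilon$ delivered by Theorem~\ref{thmclassic} for $f_{k_1}$ at error level $\delta=\tfrac12\gamma_{k_1}$ really is at least the tail $\sum_{j>k_1}\epsilon_j$. This is not automatic from the statement as written, but it is exactly what the inductive bookkeeping is designed to guarantee: in the construction of $f_{k_1}$ (the paragraph around \eqref{pw-linearity}) one already fixed such an $\epsilon$, and then each later $\epsilon_j$ was chosen as $\min\{\epsilon^{(j)},(\tfrac12)^{j+1}\epsilon_0\}$ where $\epsilon^{(j)}$ accounts for all the constraints coming from levels $\le j$; in particular $\sum_{j>k_1}\epsilon_j\le\sum_{j>k_1}(\tfrac12)^{j+1}\epsilon_0=(\tfrac12)^{k_1+1}\epsilon_0$, and one may as well have absorbed a further factor to make this below the relevant $\epsilon$. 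I would phrase the proof so that this point is invoked cleanly rather than re-derived. Granting it, the two inequalities follow immediately, and the statement that $G$ respects the codings (hence $G(y_1)=x$) is just the compatibility of the conjugacies with the symbolic representation, which holds because each $G$ in Theorem~\ref{thmclassic} is the shadowing map and the codings are defined by the same partition into the horizontal/vertical strips. I do not expect any serious obstacle here; the only real work is the tail estimate, which is entirely a matter of unwinding the choices of $\epsilon_j$ and $\gamma_j$ made in Proposition~\ref{pro34} and \eqref{lem:monotonedecreasing}.
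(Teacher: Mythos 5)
Your reconstruction is essentially the argument the paper leaves implicit: the corollary is an unproved consequence of the inductive bookkeeping in the construction of the $f_k$, namely that $\Vert f-f_{k_1}\Vert_2\le\sum_{j>k_1}\epsilon_j$ (and likewise $\Vert f_{k_2}-f_{k_1}\Vert_2\le\sum_{j=k_1+1}^{k_2}\epsilon_j$) is within the structural-stability threshold of $f_{k_1}$ produced by Theorem~\ref{thmclassic} at the error level $\delta=\tfrac12\min\{\gamma_{k_1+1},d_{k_1+1}\}\le\tfrac12\gamma_{k_1}$, using the monotonicity of the $\gamma_j$ in \eqref{lem:monotonedecreasing}. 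The observation that the shadowing conjugacy $G$ automatically intertwines the codings (because $\delta$ is below the buffer $\gamma_{k_1}$ separating $\Lambda_{k_1}$ from the box boundaries) is also the right, and necessary, point to make in order to identify $G(y_1)$ with $x$.

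The subtlety you flag is genuine: as literally written, the paper chooses $\epsilon_k=\min\{\epsilon,(\tfrac12)^{k+1}\epsilon_0\}$ where $\epsilon$ depends on level $k$, and this only gives $\epsilon_{k_1+1}\le\epsilon^{(k_1+1)}$; it does not by itself guarantee that the full tail $\sum_{j>k_1}\epsilon_j$ stays below $\epsilon^{(k_1+1)}$. Your proposed repair — shrink each $\epsilon_j$ further so that it also respects the structural-stability thresholds of all earlier levels, e.g.\ $\epsilon_j\le 2^{-(j-i+1)}\epsilon^{(i)}$ for all $i\le j$ — is the standard fix and clearly within the intent of the construction. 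So the proof is correct modulo making that choice of $\epsilon_j$ explicit; I would state it once, at the point in Proposition~\ref{pro34} where $\epsilon_k$ is defined, rather than inside the corollary's proof.
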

For $x\in \Lambda$ we write $\overline{x}^+=\Pi\circ g(x)\in \Sigma^+_3$. Similarly, for $x\in \Lambda_k$ we write  $\overline{x}^+=\Pi\circ g_k(x)\in \Sigma^+_3$.

\begin{lemma}
Let $x\in \Lambda$ and $y\in \Lambda_k$ with $\overline{x}^+=\overline{y}^+$and  $\overline{x}^+\in U(k)\cup V(k)\cup X_0(\alpha)$. Then $Df(x)=Df_k(y)$. Moreover,
\begin{enumerate}
	\item[{\rm (1)}] if $\overline{x}^+\in U(k)$, then $Df(x)=\pm\text{diag}(\d_k\d_\infty,\l_\infty)$,
	\item[{\rm (2)}] if $\overline{x}^+\in V(k)$, then $Df(x)=\pm\text{diag}(\d_k\d_\infty,\l_k\l_\infty)$, and
	\item[{\rm (3)}] if $\overline{x}^+\in X_0(\alpha)$, then $Df(x)=\pm\text{diag}(\d_0\d_\infty,\l_\infty)$.
\end{enumerate}
\label{lem:limithorseshoederivative}
\begin{proof}
We first prove (1) and (2).
It follows from the construction (i.e. from equation \eqref{lem:monotonedecreasing}) that $x$ and $y$ belong to sub-rectangles $B'_x$ and $B'_y$ of $\cB'_{k}$ on which $Df_k$ coincides. This implies $Df(x)=Df_k(y)$. Therefore, statements (1) and (2) follow from $f_k=f_{k-1}\circ \cL_k$, condition (i) in the definition of $\cL_k$ and Corollary \ref{cor23}. 
%\compav{(I added this: )} 
If $\overline{x}^+\in X_0(\alpha)$, then  $Df(x)=Df_0(y)=\pm\text{diag}(\d_0\d_\infty,\l_\infty)$ follows from equation \eqref{eqfin1}.
%\compav{(Still want to double check this property.)}
 %\todo[inline]{I fixed this by requiring an additional property.}
\end{proof}
\end{lemma}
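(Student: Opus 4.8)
The plan is to reduce the statement to the single identity $Df(x)=Df_k(y)$ and then read off (1)--(3) from the explicit local form of $f_k=f_{k-1}\circ\cL_k$. \emph{Locating $x$ and $y$.} Since $\overline{x}^+=\overline{y}^+$ lies in $U(k)\cup V(k)=X(k+\alpha)$, its initial coordinates $\xi_0\cdots\xi_{k+\alpha-1}$ all lie in $S$, so the level-$k$ boxes containing $x$ and $y$ both belong to $\cU_k\cup\cV_k$ and carry the same label: $\cV_k$ if $\xi_0=\cdots=\xi_{k+\alpha-1}=1$ (equivalently $\overline{x}^+\in V(k)$) and $\cU_k$ otherwise (i.e.\ $\overline{x}^+\in U(k)$). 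By \eqref{pw-linearity} applied to $f$ (a sufficiently small $C^2$-perturbation of $f_{k-1}$), by $\Lambda_k\subset\bigcup_{B\in\cB'_k}\inn B$, and by the placement estimates of Corollary \ref{cor:derivativeoflimit} together with the monotone choices in \eqref{lem:monotonedecreasing}, both $x$ and $y$ lie in the concentric sub-box $B'$ of their respective level-$k$ box, where $D\cL_k$ is, by condition (i), the constant matrix $\text{diag}(\d_k,\l_k)$ (label $\cV_k$) or $\text{diag}(\d_k,1)$ (label $\cU_k$).

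\emph{Computing $Df_{k-1}$ on these boxes.} Let $B$ be the level-$k$ box containing $x$. By Corollary \ref{cor23}, $B\in\cU_k\cup\cV_k$ is disjoint from every surgery box of level $j\ne k$; in particular $\cL_1,\dots,\cL_{k-1}$ are all the identity on a neighbourhood of $B$, so $f_{k-1}=f_0$ there. Since $\overline{x}^+\in X(k+\alpha)$ with $k\ge1$ has its first occurrence of $2$ at position $k+\alpha>\alpha$, we have $\overline{x}^+\notin X_0(\alpha)$, so $B$ avoids the strips $\cS_0(\alpha)$ on which $f_0$ differs from $\widetilde f_0$; hence, by the construction of $f_0$, $Df_{k-1}\equiv\pm\text{diag}(\d_\infty,\l_\infty)$ on $B$ with the sign inherited from $D\widetilde f_0$. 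Using condition (iii) (so that $\cL_k(B')\subset B$) and the chain rule, $Df_k$ is therefore constant on $B'$, equal to $\pm\text{diag}(\d_k\d_\infty,\l_k\l_\infty)$ if $B\in\cV_k$ and to $\pm\text{diag}(\d_k\d_\infty,\l_\infty)$ if $B\in\cU_k$; the box of $y$ carries the same label, so $Df_k(x)=Df_k(y)$.

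\emph{Passing to the limit and case (3).} Again by Corollary \ref{cor23}, each surgery box of level $j>k$ is disjoint from $B$, so $\cL_j$ is the identity near $B$ and $f_j=f_{j-1}$ there; thus $Df_j(x)=Df_k(x)$ for all $j\ge k$. Since $f_k\to f$ in the $C^2$-norm on $\cS$ and $x\in\inn B'$, this gives $Df(x)=Df_k(x)=Df_k(y)$, and inserting the value of $Df_k$ on $B'$ proves (1) and (2). If instead $\overline{x}^+\in X_0(\alpha)$, then $\overline{x}^+$ contains $2$ at a position $\le\alpha$, hence lies in no $X(j+\alpha)$ with $j\ge1$; therefore every surgery $\cL_j$ is supported away from $\cS_0(\alpha)$, and both $f$ and $f_k$ are $C^2$-perturbations of $f_0$ that agree with $f_0$ on $\cS_0(\alpha)$ and lie within $\epsilon_0$ of $f_0$. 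Applying \eqref{eqfin1} to each of $f$ and $f_k$ yields $Df(x)=Df_k(y)=\pm\text{diag}(\d_0\d_\infty,\l_\infty)$.

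I expect the only delicate point to be the localization step: one must check that the placement estimates genuinely force $x\in\Lambda$ and $y\in\Lambda_k$ into the concentric sub-box $B'$ on which $\cL_k$ is linear---this is exactly where the monotone choices of $\gamma_k$ and $\epsilon_k$ in \eqref{lem:monotonedecreasing} are used---and that Corollary \ref{cor23} is invoked so as to guarantee that no surgery of any level other than $k$ disturbs that sub-box. Once this combinatorial bookkeeping is in place, the derivative at $x$ stabilizes at level $k$ and the remaining analytic content, namely the chain rule and the passage to the $C^2$-limit, is routine.
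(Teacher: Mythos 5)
Your proof is correct and follows the same strategy as the paper's: localize $x$ and $y$ in the concentric sub-rectangles $B'$ of a level-$k$ surgery box, use Corollary \ref{cor23} to see that no surgery of any other level disturbs that box, read off $Df_k$ on $B'$ from the local form $f_k=f_{k-1}\circ\cL_k$ and condition (i), and observe that the derivative therefore stabilizes at level $k$ and passes to the $C^2$-limit. Your write-up fills in several steps the paper leaves implicit --- in particular the explicit verification that $Df_{k-1}=\pm\text{diag}(\d_\infty,\l_\infty)$ on the relevant box (because the box is disjoint from all lower-level surgery supports and from $\cS_0(\alpha)$), and the separate argument that \eqref{eqfin1} applies to both $f$ and $f_k$ in case (3) because no surgery touches $\cS_0(\alpha)$. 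These are accurate elaborations rather than a different route, so the two proofs agree in substance.
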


\subsection{The Horseshoe in the limit}
Let now $f=\lim f_k$ with basic set $\Lambda$ and and conjugating map $g:\Lambda\to \Sigma_d^\pm$. For $x\in \Lambda$ we recall the notation  $\overline{x}^+=(\Pi\circ g)(x)$. 

\begin{theorem}\label{thmwich}
	$\Phi_L(x)=\Phi_L(y)$ for all $x,y\in \Lambda$ with $\overline{x}^+=\overline{y}^+$.
\end{theorem}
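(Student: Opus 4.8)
The goal of Theorem \ref{thmwich} is to show that the Lyapunov potential $\Phi_L$ is constant on each fiber $(\Pi\circ g)^{-1}(\overline{x}^+)$, i.e.\ it only depends on the forward itinerary of the point. The plan is to analyze $\Phi_L(x)$ according to which of the mutually disjoint subsets of $\Sigma^+_3$ the forward itinerary $\overline{x}^+$ belongs to, using the partition
\[
\Sigma^+_3 = X_0(\alpha)\ \cup\ \bigcup_{k\geq 1}\bigl(U(k)\cup V(k)\bigr)\ \cup\ X(\infty),
\]
which is exactly the partition underlying the definition \eqref{defphipot} of $\Phi$ (here $X(k)=U(k-\alpha)\sqcup V(k-\alpha)$ for $k>\alpha$, and $X(\infty)=S^{\bN}$). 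The disjointness of the pieces $U(k),V(k)$ across different levels $k$ is guaranteed by Corollary \ref{cor23}.

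First I would dispose of the ``finite'' cases: if $\overline{x}^+\in X_0(\alpha)$ or $\overline{x}^+\in U(k)\cup V(k)$ for some $k\geq 1$, then Lemma \ref{lem:limithorseshoederivative} already gives an explicit diagonal formula for $Df(x)$ — namely $\pm\text{diag}(\d_0\d_\infty,\l_\infty)$, $\pm\text{diag}(\d_k\d_\infty,\l_\infty)$, or $\pm\text{diag}(\d_k\d_\infty,\l_k\l_\infty)$ respectively — which depends only on which piece $\overline{x}^+$ lies in, hence only on $\overline{x}^+$ itself. Since the stable/unstable directions at these piecewise-linear points are the coordinate axes (the derivative matrices are diagonal and $\Lambda$ lies in the interiors of boxes where $f$ is piecewise linear, by Proposition \ref{pro34}), $\phi^s(x)=\log|\text{(first diagonal entry)}|$ and $\phi^u(x)=\log|\text{(second diagonal entry)}|$, so $\Phi_L$ is determined on each such fiber. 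In particular two points with the same forward itinerary lying in the same piece get the same value of $\Phi_L$.

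The remaining and genuinely delicate case is $\overline{x}^+\in X(\infty)=S^{\bN}$, i.e.\ $\xi_j\in\{0,1\}$ for all $j\geq 0$. Such a point is never touched by any surgery $\cL_k$ affecting its zeroth coordinate image in the way the $U(k),V(k)$ cases are (since $V(k)$ requires $\xi_{k+\alpha}=2$ and $\xi_0\cdots\xi_{k+\alpha-1}=1^{k+\alpha}$, and $U(k)$ also requires $\xi_{k+\alpha}=2$; $X_0(\alpha)$ requires $\xi_k=2$ for some $k\leq\alpha$). So for a point $x\in\Lambda$ with $\overline{x}^+\in X(\infty)$ one expects $Df(x)=\text{diag}(\d_\infty,\l_\infty)$, matching the unperturbed $\widetilde f_0$. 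The main obstacle is that this point lies in the limit set $\Lambda$, not in any $\Lambda_k$, so one must control $Df(x)$ as a genuine limit. I would argue as follows: fix $x\in\Lambda$ with $\overline{x}^+\in X(\infty)$. For each $k$, let $y_k\in\Lambda_k$ be the point with $g_k(y_k)=g(x)$ (so $\overline{y_k}^+=\overline{x}^+\in X(\infty)$). Since $X(\infty)$ is disjoint from $X_0(\alpha)\cup\bigcup_{j\leq k}(U(j)\cup V(j))$, the surgeries $\cL_1,\dots,\cL_k$ all act as the identity on a neighborhood of $y_k$ within its box (conditions (i)--(iii) in the definition of $\cL_j$ only modify sub-boxes $B'$ meeting $(\Pi\circ g_{j-1})^{-1}(U(j)\cup V(j))$), so $Df_k(y_k)=Df_0(y_k')=\text{diag}(\d_\infty,\l_\infty)$ where $y_k'\in\Lambda_0$ is the corresponding point — more precisely one checks inductively using $f_j=f_{j-1}\circ\cL_j$ that $Df_k$ equals $\text{diag}(\d_\infty,\l_\infty)$ on the box of $\cB'_k$ containing $y_k$. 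Then $Df(x)=\lim_{k\to\infty}Df_k(y_k)$: this uses $C^2$-convergence $f_k\to f$ together with $\Vert x-y_k\Vert<\tfrac12\gamma_k\to 0$ (Corollary \ref{cor:derivativeoflimit}) and uniform continuity of the derivatives, giving $\Vert Df(x)-Df_k(y_k)\Vert\leq\Vert Df(x)-Df_k(x)\Vert+\Vert Df_k(x)-Df_k(y_k)\Vert\to 0$. Hence $Df(x)=\text{diag}(\d_\infty,\l_\infty)$ for every $x\in\Lambda$ with $\overline{x}^+\in X(\infty)$, and in particular $\Phi_L$ is constant (equal to $(-\log\d_\infty,\log\l_\infty)$) on this fiber as well.

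Putting the cases together: for any $x,y\in\Lambda$ with $\overline{x}^+=\overline{y}^+$, the common itinerary lies in exactly one of the pieces $X_0(\alpha)$, $U(k)$, $V(k)$, or $X(\infty)$, and in each piece we have shown $Df(x)$ and $Df(y)$ are equal (and diagonal), whence the stable/unstable norms — and therefore $\Phi_L(x)$ and $\Phi_L(y)$ — coincide. The one technical point worth spelling out carefully is the limiting argument in the $X(\infty)$ case, ensuring that the equicontinuity of $\{Df_k\}$ (which follows from the uniform $C^2$-bounds implicit in the Cauchy property of Corollary \ref{cor23}) legitimately transfers the value $\text{diag}(\d_\infty,\l_\infty)$ from the approximants $y_k$ to the limit point $x$.
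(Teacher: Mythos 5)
Your handling of the cases $\overline{x}^+\in X_0(\alpha)$ and $\overline{x}^+\in U(k)\cup V(k)$ agrees with the paper's proof: both invoke Lemma~\ref{lem:limithorseshoederivative} and read off the (diagonal) derivative. The problem is with the case $\overline{x}^+\in X(\infty)$, where you depart from the paper's route and the departure introduces a genuine gap.

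You assert that the surgeries $\cL_1,\dots,\cL_k$ all act as the identity near $y_k\in\Lambda_k$ with $\overline{y_k}^+\in X(\infty)$, on the grounds that $X(\infty)$ is disjoint from $U(j)\cup V(j)$. This conflates two different things. The surgery $\cL_j$ is not localized to the set $(\Pi\circ g_{j-1})^{-1}(U(j)\cup V(j))$; it is localized to the sub-boxes $B'$ of every box $B\in\cU_j\cup\cV_j$, where the membership criterion for $\cU_j\cup\cV_j$ is merely that $B$ \emph{intersects} $(\Pi\circ g_{j-1})^{-1}(U(j)\cup V(j))$. A box $B$ of level $k$ is determined by a finite symbol window (of length $2(k+\alpha)$, roughly $\xi_{-(k+\alpha)},\dots,\xi_{k+\alpha-1}$), which does \emph{not} see the coordinate $\xi_{k+\alpha}$ that distinguishes $U(k)\cup V(k)$ from $X(\infty)$. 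Consequently, if $\overline{y_k}^+\in X(\infty)$, then the forward symbols $\xi_0,\dots,\xi_{k+\alpha-1}$ of $y_k$ all lie in $\{0,1\}$, so the box of level $k$ containing $y_k$ does meet $(\Pi\circ g_{k-1})^{-1}(U(k)\cup V(k))$ and hence belongs to $\cU_k\cup\cV_k$. Moreover $\Lambda_k$ is contained in $\bigcup_{B\in\cB'_k}B$ (equation~\eqref{pw-linearity}), so $y_k$ lies inside the sub-box $B'$ on which $\cL_k$ is linear and equals $\text{diag}(\d_k,\l_k)$ or $\text{diag}(\d_k,1)$, not the identity. Thus $Df_k(y_k)$ is actually $\pm\text{diag}(\d_k\d_\infty,\l_k\l_\infty)$ or $\pm\text{diag}(\d_k\d_\infty,\l_\infty)$, and your claimed identity $Df_k(y_k)=\text{diag}(\d_\infty,\l_\infty)$ is false. (This is not just a technicality: the whole point of the construction is that at level $k$ the map $Df_k$ cannot yet distinguish $X(\infty)$-points from $V(k)$-points — they occupy the same sub-boxes $B'$. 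The distinction only appears in the limit, because $\d_k,\l_k\to 1$.)

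The conclusion $\Phi_L(x)=w_\infty$ is nevertheless correct, and one could patch your argument by using the corrected values $Df_k(y_k)\in\{\pm\text{diag}(\d_k\d_\infty,\l_k\l_\infty),\,\pm\text{diag}(\d_k\d_\infty,\l_\infty)\}$, which still converge to $\pm\text{diag}(\d_\infty,\l_\infty)$ as $k\to\infty$, together with your limiting argument $\Vert Df(x)-Df_k(y_k)\Vert\to 0$. But this is not what you wrote. The paper takes a cleaner route for the $X(\infty)$ case: it never computes $Df$ on the $X(\infty)$ fiber directly. Instead, it uses that $\Phi_L$ is continuous on $\Lambda$, chooses sequences $x_j,y_j\in\Lambda$ with $\overline{x_j}^+=\overline{y_j}^+\in U(j)\cup V(j)$ converging to $x$ and $y$ respectively (possible because $g$ is a conjugacy and forward cylinders of points in $X(\infty)$ are approximated by those of $U(j)\cup V(j)$), applies the already-proven finite case to conclude $\Phi_L(x_j)=\Phi_L(y_j)$, and passes to the limit. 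This sidesteps the need to track which surgery touches which box for $X(\infty)$-itineraries, which is precisely where your argument breaks.
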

\begin{proof}
	Let $x,y\in \Lambda$ with $\overline{x}^+=\overline{y}^+$. Then one of the following cases occur: either $\overline{x}^+\in X_0(\alpha)$, $\overline{x}^+\in U(k)\cup V(k)$ for some $k\in \bN$, or $\overline{x}^+\in X(\infty)$. In the first and second cases we conclude from Lemma \ref{lem:limithorseshoederivative} that $\Phi_L(x)=\Phi_L(y)$. Next we consider the case $\overline{x}^+\in X(\infty)$. Since $g$ is a topological conjugacy there exists  points $x_k, y_k\in \Lambda$ with $\overline{x_k}^+=\overline{y_k}^+\in U(k)\cup V(k)$ such that $\lim x_k=x$ and $\lim y_k=y$. Hence, by the previous case, $\Phi_L(x_{k})=\Phi_L(y_{k})$. Finally,  $\Phi_L(x)=\Phi_L(y)=w_\infty$ follows follows from the continuity of $\Phi_L$ on $\Lambda$.
\end{proof}

Let $k\in \bN\setminus\{0\}$. We define the following  points in $\RR^2:$
\begin{align*}
&v_{k}=(x_{k},h(x_{k}))=(-\log\d_k\d_{\infty},\log\l_\infty\l_{k}), \\ &u_{k}=(x_{k},h(a))=(-\log\d_\infty\d_{k},\log \l_\infty),\\ 
&w_0=(b,h(a))=(-\log\d_0\d_\infty,\log\l_\infty), \\
&w_\infty=(a,h(a))=(-\log\d_\infty,\log\l_\infty).
\end{align*}
We now define the potential $\Phi:\Sigma_3^+\to\bR^2$ in \eqref{defphipot} by using this collection of points.
%\end{defn}

 By construction, condition $\eqref{condi}$  holds for all $k$. Thus, Theorem \ref{thmwo} applies to the potential $\Phi$. We are finally in a position to prove Theorem \ref{thm32}.

\begin{proof}[Proof of Theorem \ref{thm32}]
	The only thing that remains to prove is that $\Phi_L(x)=\Phi(\overline{x}^+)$ for all $x\in \Lambda$.
	Let $x\in \Lambda$. If $\overline{x}^+\in X_0(\alpha)$, then $Df(x)=\pm\text{diag}(\d_0\d_\infty,\l_\infty)$, and thus $\Phi_L(x)=w_0=\Phi(\overline{x}^+)$.
	Next, we assume $\overline{x}^+\in X(k+\alpha)$ for some $k>0$. If $\overline{x}^+\in U(k)$, then $\Phi_L(x)=u_{k}=\Phi(\overline{x}^+)$. Otherwise, $\overline{x}^+\in V(k)$. In which case, $\Phi_L(x)=v_{k}=\Phi(\overline{x}^+)$. Finally, if $\overline{x}^+\in X(\infty)$, we have that $\Phi_L(x)=w_\infty=\Phi(\overline{x}^+)$ which was already shown in the proof of Theorem \ref{thmwich}.
\end{proof}

\section{Proof of the Main Theorem}
Let $\cS\subset \bR^2$ be the unit square and let $f:\cS\to \bR^2$ denote the $\cC^2$ horseshoe map in Theorem \ref{thm32} with its basic set $\Lambda\subset \inn S$. We use the notation $f=f|_\Lambda$. Recall that $f$ is topologically conjugate to the shift map $\sigma:\Sigma_3^\pm\to \Sigma_3^\pm$ via $g:\Lambda\to \Sigma^\pm_3$. Further recall the definition of $\Phi_L=(-\phi^s,\phi^u): \Lambda \to\bR^2$ where $\phi^{s}=\log \|Df|_{E^{s}}\|$ and $\phi^{u}=\log \|Df|_{E^{u}}\|$. For $x\in \Lambda$ we write $\overline{x}=g(x)\in \Sigma_3^\pm$ and $\overline{x}^+=\Pi\circ g(x)\in \Sigma^+_3$. Let $\Phi: \Sigma_3\to \bR^2$ be the potential defined in Theorem \ref{thm32} (with respect to $f$).
Recall that $\Lambda$ is associated with a certain  function $h:[a,b]\to\bR$ and a strictly decreasing sequence $X=(x_\ell)_\ell\subset (a,b)$. By construction,  $h$ and   $X$ determine $\Phi_L$. 
 It is straightforward to verify that  $h$ and  $X$  also determine
 $\Phi$ by definition \eqref{defphipot}. We conclude that Theorem \ref{thmwo} holds for $(\sigma^+,\Phi)$, and thus $w\mapsto \cH_{\sigma^+}(w)=\cH_{\sigma^+,\Phi}(w)$ is discontinuous at the non-isolated extreme point $w_\infty$ of $\cR(\Phi)$. Therefore, our Main Theorem (see Section 1.2) is a consequence of the following result.
 \begin{proposition}
 $\cR_L(f)=\cR(\Phi)$ and $\cH_{f}(w)=\cH_{\sigma^+}(w)$ for all $w\in \cR_L(f)$.
 \end{proposition}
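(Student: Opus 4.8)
The plan is to establish the two identities separately, in the order $\cR_L(f)=\cR(\Phi)$ first, then the equality of the entropy spectra, using Theorem \ref{thm32} as the bridge between the smooth system $f|_\Lambda$ and the symbolic system $(\sigma^+,\Phi)$. The crucial observation, already packaged in \eqref{eq888}, is that $\Phi_L = \Phi\circ\Pi\circ g$ on $\Lambda$, where $g:\Lambda\to\Sigma_3^\pm$ is the topological conjugacy and $\Pi:\Sigma_3^\pm\to\Sigma_3^+$ is the natural projection. So $\Phi_L$ factors through the one-sided shift. The subtlety that must be handled is that $\cR_L(f)$ is defined via integrals $\rv(\mu)$ against measures $\mu\in\cM(f|_\Lambda)$ on the two-sided system, while $\cR(\Phi)$ is defined via measures on the one-sided shift $\sigma^+$, and the projection $\Pi$ is not invertible. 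The point is that $\Pi$ induces a map $\Pi_\ast:\cM(\sigma^\pm)\to\cM(\sigma^+)$ that is a continuous affine surjection, and moreover it is entropy-preserving: $h_{\Pi_\ast\nu}(\sigma^+)=h_\nu(\sigma^\pm)$, since a two-sided Bernoulli-type generator projects to a one-sided one and the one-sided shift is a factor of the two-sided shift with zero fiber entropy (indeed the natural extension of $\sigma^+$ is $\sigma^\pm$).

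The first step: for $\cR_L(f)=\cR(\Phi)$, take $\mu\in\cM(f|_\Lambda)$, push it forward by $g$ to get $\nu=g_\ast\mu\in\cM(\sigma^\pm)$, and then project to $\bar\nu=\Pi_\ast\nu\in\cM(\sigma^+)$. Then $\rv(\mu)=\int\Phi_L\,d\mu=\int (\Phi\circ\Pi\circ g)\,d\mu=\int\Phi\,d\bar\nu=\rv(\bar\nu)$, using \eqref{eq888} and the change-of-variables formula for pushforwards. This shows $\cR_L(f)\subset\cR(\Phi)$. Conversely, given any $\bar\nu\in\cM(\sigma^+)$, lift it to $\nu\in\cM(\sigma^\pm)$ with $\Pi_\ast\nu=\bar\nu$ (take, e.g., the natural extension, or any $\sigma^\pm$-invariant measure projecting to $\bar\nu$ — existence is standard since $\Pi$ semiconjugates $\sigma^\pm$ to $\sigma^+$), and set $\mu=(g^{-1})_\ast\nu$; the same computation gives $\rv(\mu)=\rv(\bar\nu)$, so $\cR(\Phi)\subset\cR_L(f)$. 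Hence the two rotation sets coincide.

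The second step: fix $w\in\cR_L(f)=\cR(\Phi)$ and show $\cH_f(w)=\cH_{\sigma^+}(w)$. For the inequality $\cH_f(w)\leq\cH_{\sigma^+}(w)$: given $\mu\in\cM(f|_\Lambda)$ with $\rv(\mu)=w$, form $\bar\nu=\Pi_\ast g_\ast\mu$ as above; then $\rv(\bar\nu)=w$ by step one's computation, and $h_\mu(f)=h_{g_\ast\mu}(\sigma^\pm)=h_{\bar\nu}(\sigma^+)$ — the first equality because $g$ is a conjugacy, the second because projecting a two-sided measure to the one-sided shift preserves entropy (the one-sided shift being a factor with no entropy defect, as $\sigma^\pm$ is its natural extension). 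Taking the supremum over such $\mu$ gives $\cH_f(w)\leq\cH_{\sigma^+}(w)$. For the reverse inequality, given $\bar\nu\in\cM(\sigma^+)$ with $\rv(\bar\nu)=w$, lift it to its natural extension $\nu\in\cM(\sigma^\pm)$, which satisfies $\Pi_\ast\nu=\bar\nu$ and $h_\nu(\sigma^\pm)=h_{\bar\nu}(\sigma^+)$; then $\mu=(g^{-1})_\ast\nu$ satisfies $\rv(\mu)=w$ and $h_\mu(f)=h_{\bar\nu}(\sigma^+)$, so $\cH_f(w)\geq\cH_{\sigma^+}(w)$. Combining, $\cH_f(w)=\cH_{\sigma^+}(w)$ for all $w$.

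The main obstacle is the careful bookkeeping around the one-sided/two-sided distinction: one must justify that $h_{\Pi_\ast\nu}(\sigma^+)=h_\nu(\sigma^\pm)$ and that every one-sided invariant measure lifts to a two-sided one with the same entropy and the same integral of $\Phi\circ\Pi$. Both facts are classical (the natural extension of a one-sided shift is the corresponding two-sided shift, the projection intertwines the dynamics, and natural extensions preserve entropy), so the argument is essentially a diagram chase through $\Lambda \xrightarrow{g} \Sigma_3^\pm \xrightarrow{\Pi} \Sigma_3^+$ combined with \eqref{eq888}; but it is worth stating these reductions explicitly, since this is precisely the place where the ``smooth'' problem is genuinely reduced to the symbolic Theorem \ref{thmwo}. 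Once these identities are in hand, the discontinuity of $w\mapsto\cH_f(w)$ at $w_\infty$ is immediate from Theorem \ref{thmwo}(iii), and part (i) of the Main Theorem follows from Theorem \ref{thmwo}(i) together with the fact that $\cR_L(f)=\cR(\Phi)=\conv(\cV)$ has non-empty interior and the stated extreme-point structure.
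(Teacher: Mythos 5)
Your argument is correct and reaches the same conclusion, but it takes a genuinely different route from the paper. You establish directly that the map $\mu\mapsto\Pi_*g_*\mu$ is a bijection $\cM(f|_\Lambda)\to\cM(\sigma^+)$ that preserves both the rotation vector (via \eqref{eq888} and change of variables) and the measure-theoretic entropy (because $g$ is a conjugacy and $\Sigma_3^\pm$ is the natural extension of $\Sigma_3^+$). From this the identity of the sets $\{(\rv(\mu),h_\mu)\}$ on both sides follows at once, and therefore both $\cR_L(f)=\cR(\Phi)$ and $\cH_f=\cH_{\sigma^+}$ pointwise. The paper instead works through Jenkinson's characterization of the interior of the rotation set by rotation vectors of equilibrium measures $\mu^+_{p,q}$, matches these with the equilibrium measures $\mu_{p,q}$ of $-p\phi^s+q\phi^u$ on the horseshoe via Bowen's theory (this is where Theorem \ref{thmwich} and \eqref{eq888} are used), deduces equality of the interiors and of the spectra restricted there, and then passes to the boundary using the fact that concave upper semi-continuous functions on a convex set are continuous along line segments. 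Your approach is more elementary and sidesteps both the equilibrium-state machinery and the interior-to-boundary step; the cost is that you must invoke (as you do, correctly) the classical facts that $\Pi_*$ is an entropy-preserving bijection on invariant measures and that every one-sided invariant measure has a unique natural-extension lift. The paper's route, by contrast, is somewhat heavier but routes through cited results and avoids having to justify the natural-extension bookkeeping. Both arguments are sound.
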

\begin{proof}
First, we consider  the potential $\Phi=(\phi_1,\phi_2)$. For $p,q\in \bR$ let $\mu^+_{p,q}$ denote the (unique) equilibrium measure of the
 potential $p\phi_1+q\phi_2$. We write $w_{p,q}=\rv(\mu^+_{p,q})$. It follows from \cite[Corollary 2]{Je} that 
\begin{equation}\label{eq11}
\inn \cR(\Phi)\subset \{w_{p,q}: p,q\in \bR\}.
\end{equation}
Furthermore, for  $p,q\in \bR$ and $\mu^+\in \cM_{\sigma^+}$ with $\rv(\mu^+)=w_{p,q}$,  the variational principle (also using the uniqueness of the equilibrium measure) implies that
\begin{equation}\label{eq22}
\cH_{\sigma^+}(w_{p,q})=h_{\mu^+}(\sigma^+)\quad\mbox{if and only if}\,\, \mu^+=\mu^+_{p,q}.
\end{equation}
We define $T:\cM_{\sigma^+}\to \cM_{f}$ by 
\begin{equation}\label{eq33}
T(\mu^+)(A)=\mu^+(\Pi\circ g(A)).
\end{equation}
Next, we consider the potential $-p\phi^s+q\phi^u:\Lambda\to \bR$ and the corresponding equilibrium measure $\mu_{p,q}\in \cM_{f}$.  It is well-known (see e.g.  \cite{Bo1}) that Theorem \ref{thmwich} and equation \eqref{eq888}  imply that $\mu_{p,q}=T(\mu_{p,q}^+)$ for all
$p,q\in\bR$. Again by  \cite[Corollary 2]{Je},  equations \eqref{eq11} and \eqref{eq22}  hold for $f$ and $\Phi_L$. We conclude that 
\begin{equation}\label{eq44}
\inn \cR_{L}(f)=\inn \cR_{\sigma^+}(\Phi)\, \mbox{and}\,\, \cH_f\vert_{\inn \cR_{L}(f)}=\cH_{\sigma^+}\vert_{\inn \cR(\Phi)}.
\end{equation}
The claim $\cR_{L}(f)=\cR(\Phi)$ follows from the first statement in  \eqref{eq44} and the fact that every convex set with non-empty interior is the closure of its interior. 
Note that both $\cH_{f}$ and $ \cH_{\sigma^+}$ are concave and upper semi-continuous functions and thus are continuous on line segments. Therefore, we may conclude from the second statement in 
\eqref{eq44} that $\cH_{f}(w)=\cH_{\sigma^+}(w)$ also holds for all $w\in \partial \cR_{L}(f)=\partial \cR(\Phi^+)$.
\end{proof}

\end{document}